\newcommand{\true}{_{\text{\scriptsize{true}}}}
\newcommand{\calEk}{{\calE}^{(k)}_{k+1}}
\newcommand{\Rki}{\bfR_{k+1}^{-1}}
\newcommand{\whAEk}{\widehat{\bfA}_{k+1}^{\calE}}
\newcommand{\AEkT}{(\bfA_{k+1}^{\calE})\t}
\newcommand{\AcalE}{(\bfA^{\calE})}
\newcommand{\bRk}{\bar{\bfR}_{k+1}}
\newcommand{\bRSk}{\bar{\bfR}_{k+1}^{(S)}}
\DeclareMathOperator*{\argmin}{argmin}  
\newtheorem{remark}{Remark}
\newtheorem{proposition}{Proposition}
\newcommand{\msl}[1]{\textcolor{black}{#1}}
\numberwithin{equation}{section}
\title{New flexible  and inexact Golub-Kahan algorithms\\for inverse problems}
\author[1]{Silvia Gazzola}
\author[2]{Malena Sabaté Landman}
\affil[1]{Dipartimento di Matematica, Università di Pisa, \textit{silvia.gazzola@unipi.it}.}
\affil[2]{Mathematical Institute, Oxford, \textit{malena.sabatelandman@maths.ox.ac.uk}}
\date{}
\begin{document}

\maketitle
\begin{abstract}
This paper introduces a new class of algorithms for solving large-scale linear inverse problems based on new flexible and inexact Golub-Kahan factorizations. The proposed methods iteratively compute regularized solutions by approximating a solution to (re)weighted least squares problems via projection onto adaptively generated subspaces, where the constraint subspaces for the residuals are (formally) equipped with iteration-dependent preconditioners or inexactness. The new solvers offer a flexible and inexact Krylov subspace alternative to other existing Krylov-based approaches for handling general data fidelity functionals, e.g., those expressed in the  $p$-norm. Numerical experiments in imaging applications, such as 
image deblurring and computed tomography, 
highlight the effectiveness and competitiveness of the proposed methods with respect to other popular methods. \\

\footnotesize
\textbf{Keywords}. Flexible and inexact Krylov subspace methods, Golub-Kahan factorizations, covariance preconditioning, $p$-norm data fitting, image deblurring, computed tomography.
\normalsize
\end{abstract}

\section{Introduction}\label{sec: Intro}
Discrete linear inverse problems are formulated as the solution of 
large-scale linear systems of equations of the form
\begin{equation}\label{eq: linsys}
\bfA\bfx\true+\bfn=\bfb\,,
\end{equation}
where the discretized forward operator $\bfA\in \bbR^{m\times n}$ is large-scale with ill-determined rank, $\bfx\true\in\bbR^n$ is the unknown quantity of interest, and $\bfn\in \bbR^m$ are some unknown perturbations (noise) affecting the available data $\bfb\in \bbR^m$. Such problems arise in many relevant applications in Science and Engineering, including image deblurring and computed tomography. Due to the ill-posedness of $\bfA$ and the presence of noise in $\bfb$, one should regularize \eqref{eq: linsys} in order to recover a meaningful approximation of $\bfx\true\in \bbR^n$, i.e., replace the ill-posed linear system by a nearby problem, whose solutions is more stable with respect to perturbations in the data. We refer to \cite{HaHa93, hansen2010discrete, HaNaOL06} for more details on discrete inverse problems and classical regularization methods.

Many iterative solvers that look for an approximation 
\begin{equation}\label{eq: LS}
\bar\bfx^\ast=\argmin_{\bfx\in\bbR^n}\nicefrac{1}{2}\|\bfA\bfx-\bfb\|_2^2
\end{equation}
of $\bfx\true$ in \eqref{eq: linsys}, 
including many Krylov subspace methods, are known to have regularizing properties achieved by early termination. Namely, the first iterations are unaffected by noise and the approximate solution approaches the true solution, but this stops once the noisy components and the unwanted solution of the unregularized least squares (LS) problem \eqref{eq: LS} start to be recovered. This type of behavior is usually referred to as semiconvergence; see \cite{pchDPC, hansen2010discrete}. Among Krylov methods, the mathematically equivalent CGLS and LSQR can handle rectangular matrices in \eqref{eq: LS} and their regularizing properties are well understood; see \cite{HANSEN2025} and \cite[Chapter 6]{Han97}. 
%%%% GOOD FOR ARXIVE -- START %%%%
Essentially, when applying such methods to \eqref{eq: LS}, we are trying to `fit as much of the data, fitting the minimum amount of noise'. Even if such solvers can be efficiently employed with additional (often Tikhonov-like) regularization with the goal of mitigating semiconvergence, for simplicity this is not considered in this paper; see \cite{Chung2024review} for more details.
%%%% GOOD FOR ARXIVE -- END %%%%

In this paper we consider iterative regularization methods that solve generalizations of problem \eqref{eq: LS}, whereby some statistical information about the noise $\bfn$ affecting the data in \eqref{eq: linsys} is encoded into the problem formulation to enable more accurate recovery of $\bfx\true$. 
More specifically, if $\bfR\in\bbR^{m \times m}$ is the noise covariance matrix (symmetric positive definite (SPD) by construction), the well-known Gauss-Markov theorem prescribes to solve the weighted LS problem
\begin{equation}\label{eq: LSR}
\bar\bfx^\ast=\argmin_{\bfx\in\bbR^n}\nicefrac{1}{2}\|\bfA\bfx - \bfb\|_{\bfR^{-1}}^2,
\end{equation}
to find the best linear unbiased estimator (BLUE); see \cite{Zyskind1969GaussMarkov} and \cite[Chapter 10]{greene2003econometric}. Note that this reduces to \eqref{eq: LS} if the noise is Gaussian white since, in this case, $\bfR=\sigma^2\bfI$, where $\sigma$ is the standard deviation. 

In many situations, the covariance matrix $\bfR$ is not (fully) known but can be learned from the data. Indeed, often $\bfR$ is assumed to have a parametric form whose parameters can be estimated by solving an optimization problem. However, this usually requires computing  intermediate approximations of the solution $\bar{\bfx}^\ast$ for an intermediate set of parameters, so these solvers typically rely on nested cycles of iterations (inner for approximating $\bar\bfx^\ast$, outer for updating $\bfR$). 
Moreover, in some instances, $\bfR$ might depend explicitly on the noiseless measurements, as in methods that approximate Poisson noise and other heteroskedastic distributions;  see, e.g., \cite{Bardsley2026covariance} and \cite[Chapter 10]{greene2003econometric}. 
Similarly, the solution of \eqref{eq: linsys}, where the noise distribution has an underlying sparsity assumption, can be well recovered by considering a $p$-norm data fit, so that 
\begin{equation}\label{eq: LSp}
\bar{\bfx}^\ast=\argmin_{\bfx\in\bbR^n}\nicefrac{1}{p}\|\bfA\bfx-\bfb\|_p^p,
\end{equation} 
with $0<p\leq 1$. This is the case, for instance, of impulse or salt-and-pepper noise. More generally, the value of $p$ is linked to the assumption of the noise following a given Generalized
Error Distribution, so that, from a statistical viewpoint, solving \eqref{eq: LSp} corresponds to computing the maximum likelihood estimator; 
see \cite{benning2011error, Buccini2020GED}, and the references therein. 
Although sophisticated non-smooth and possibly non-convex optimization methods exist to handle these cases (see, e.g., \cite{dong2009, dong2013, dong2015}), we consider an Iteratively Reweighted Least Squares (IRLS) approach; see \cite[Section 4.5.2]{Bjo96a} and, specifically for regularized inverse problems, \cite{wohlberg2008lp, wohlberg2007tv}. 

IRLS methods can be regarded as specific instances of majorization-minimization (MM) methods, where a sequence of majorizing functions for \eqref{eq: LSp} is constructed at successive approximations of $\bar\bfx^\ast$, each obtained by minimizing the corresponding majorant. More specifically, given an approximation $\widetilde{\bfx}$ of a solution of \eqref{eq: linsys}, the objective function in \eqref{eq: LSp} can be approximated by considering the  objective function in \eqref{eq: LSR} with weights
\begin{equation}\label{eq:weights_lp}
\bfR^{-1} 
%(\widetilde{\bfx})
= (\bfW(\widetilde{\bfx}))^2= \diag{ 
\left((\bfA\widetilde{\bfx}-\bfb)^2+\tau^2\right)^{\frac{p-2}{2}}
}.
\end{equation}
Note that all the above operations on vectors are performed entry-wise and the smoothing parameter $\tau>0$ is considered to avoid possible divisions by zero caused by the lack of smoothness of the objective function in \eqref{eq: LSp} at zero. The choice of weights in \eqref{eq:weights_lp} allows to interpret \eqref{eq: LSR} as the tangent majorant of a smooth approximation of the original function at $\widetilde{\bfx}$ (ignoring additive and multiplicative constants). 
Classical IRLS solvers consider a sequence of LS problems of the form \eqref{eq: LSR}, and iteratively update the matrix $\bfR^{-1}$ by taking as $\widetilde{\bfx}$ the solution of the previous LS problem \eqref{eq: LSR} in the sequence, typically starting from $\bfR^{-1}= (\bfW(\bfzero))^2$. Since, in a general large-scale setting, each problem in the sequence is solved by CGLS or LSQR, IRLS solvers are actually inner-outer iterative schemes, whereby the weights are updated at each outer iteration. However, recently, more efficient IRLS formulations have been proposed, which enable to update the weights only after one iteration of the minimization algorithm has been performed (instead of a full solve). These solvers are typically based on `non-standard' (such as flexible \cite{chung2019, FKSIRW} or generalized \cite{huang2017majorization}) Krylov methods, and have been successfully considered to handle nonnegativity constraints in \eqref{eq: LS} \cite{buccini2020modulus} (possibly with the inclusion of covariance preconditioning, so to work with \eqref{eq: LSR} \cite{NNFCGLS}) 
and variational regularization methods with a $p$-norm fit-to-data term (like the one appearing in \eqref{eq: LSp}) and a $q$-norm regularization term; see also \cite{belgians, GS}. We emphasize that, among these `non-standard' Krylov methods, only the ones based on generalized Krylov subspaces can handle fit-to-data terms expressed in the $p$-norm.

This paper proposes new solvers for the weighted least squares problem \eqref{eq: LSR}, with $\bfR$ at least partially unknown and motivated by the statistics of the noise $\bfn$ in \eqref{eq: linsys}. 
The new solvers are inspired by the IRLS approach to \eqref{eq: LSR} and \eqref{eq: LSp} described above, but aim to avoid inner-outer iterations by incorporating iteration-dependent approximations of $\bfR$ on the fly. 
In order to achieve this, we introduce new instances of the flexible \cite{chung2019} and inexact \cite{Gazzola2021inexact} Golub-Kahan factorizations, linking them in specific cases and showing that, 
in the current setting, 
%for the solution of the stated problem, 
they are mathematically equivalent. 
Building on these new factorizations we introduce a range of flexible and inexact Krylov methods, based on the basic operations of approximation, projection and differentiation – whose details are explained later in this paper. We show that, in the current framework, it is possible to recover solvers that are mathematically equivalent to some solvers already available in the literature, as well as introduce some new ones. For most of the paper, the terms `flexible' and `inexact' are used interchangeably, and are mostly motivated by the solution of\eqref{eq: LSR} and \eqref{eq: LSp}. 
%However, the consideration of `inexact' methods allows us to generalize this framework to other contexts.

This paper is organized as follows. Section \ref{sect: factorizations} introduces the new flexible and inexact Golub-Kahan factorizations. Section \ref{sect:new_solvers} develops the theoretical foundations for, and presents, the proposed methods. In particular, in Section \ref{sect:PCGLS_PLSQR}, standard Krylov solvers for weighed LS problems are recalled, which serve as a baseline for the new solvers. Section \ref{sect:inexact} then presents and analyzes a new general framework for flexible and inexact solvers, providing a unified setting. Finally, Section \ref{sect: NumExp} reports numerical experiments on inverse problems in imaging, including test problems in image deblurring and computed 
tomography, showing that the proposed solvers are competitive with other IRLS-based alternatives.

\subsubsection*{Notations} In the following, $\bfI_d$ denotes the identity matrix of order $d$. Given a (bolded lower-case) vector $\bft\in\bbR^d$, $[\bft]_i$ denotes the $i$th entry of $\bft$. Given a (bolded upper-case) matrix $\bfC$, its entries are denoted by $[\bfC]_{i,j}$, its columns are denoted by $\bfc_j$, and its range is denoted by $\calR(\bfC)$. We denote functionals (i.e., functions from $\bbR^d$ to $\bbR$) with lower-case letters, e.g., $g(\bft)$. When such functionals are evaluated in a subspace of dimension $k$ (of $\bbR^d$), and typically at the $k$th iteration of an iterative solver, we use the notation $g_k(\bft)$, i.e. these are functions from $\bbR^k$ to $\bbR$ for a given parametrization. When such functional is iteration-dependent, at the $k$th iteration we use the notation $g^{(k)}(\bft)$. The above notations can be combined, e.g., $g_k^{(k)}(\bft)$. 

\section{Flexible and inexact Golub-Kahan factorizations}\label{sect: factorizations} Consider the matrix $\bfA \in \mathbb{R}^{m \times n}$ in \eqref{eq: linsys}, and an initial vector $\hat\bfu$, together with some SPD iteration-dependent preconditioning matrices $\bfR_1^{-1}, \dots, \bfR_{k+1}^{-1}$, with $k\ll\min\{m,n\}$ (ideally updated as the steps of the following factorizations proceed). 

First we consider the new Flexible Golub-Kahan (FGK) factorization:
\begin{equation}\label{eq:FGK_1}
\bfA\bfV_k=\bfU_{k+1}{\bfM}_k\,,\quad 
{\bfA}^{\top}\bfY_{k+1}=\bfV_{k+1}{\bfT_{k+1}}\,,
\end{equation}
where $\bfu_1 = \hat\bfu / \beta $ and $\beta = \|\hat\bfu\|_2
$, and 
\begin{itemize}
\item $\bfU_{k+1}\in\bbR^{m\times (k+1)}$ and $\bfV_{k+1}=[\bfV_{k},\bfv_{k+1}]\in\bbR^{n\times (k+1)}$ have orthonormal columns $\bfu_i$ and $\bfv_i$ ($i=1,\dots,k+1$), respectively; 
\item $\bfY_{k+1}=[\bfR_1^{-1}\bfu_1,\dots, \bfR_{k+1}^{-1}\bfu_{k+1}]\in\bbR^{m\times (k+1)}$;
\item ${\bfM}_k\in\bbR^{(k+1)\times k}$ is upper Hessenberg;
\item ${\bfT}_{k+1}\in\bbR^{(k+1)\times (k+1)}$ is upper triangular.
\end{itemize}
Note that, with respect to the flexible Golub-Kahan factorization introduced in \cite{chung2019}, we allow variable preconditioning on the right of $\bfA\t$, rather than on the right of $\bfA$. 

Second, we consider a new FGK version, mathematically equivalent to \eqref{eq:FGK_1}:
\begin{equation}\label{eq:FGK_2}
\bfA\bfV_k=\bfU_{k+1}{\bfM}_k\,,\quad 
{\bfA}^{\top}\bar\bfR_{k+1}\bfU_{k+1}=\bfV_{k+1}{\bfT_{k+1}}\,,
\end{equation}
where
\begin{equation}\label{defRkbar}
\bar \bfR_{k+1} = \sum_{i=1}^{k+1} \bfR^{-1}_i \bfu_i \bfu_i\t
\end{equation}
and where the matrices $\bfV_{k}$, $\bfV_{k+1}$, $\bfU_{k+1}$, ${\bfM}_k$ and $\bfT_{k+1}$ are those appearing in \eqref{eq:FGK_1}. A model implementation of the new flexible Golub-Kahan factorization in given in Algorithm \ref{alg: FGK}.

\begin{algorithm}
\caption{Flexible Golub-Kahan factorization} 
\label{alg: FGK}
\begin{algorithmic}[1]
	\REQUIRE{$\bfA\in\bbR^{m\times n}$, initial vector $\hat \bfu\in\bbR^m$}
\STATE $\bfu_1 = \hat\bfu /\beta$ with $\beta=\|\hat\bfu\|_2$
\STATE Initialize the precondtioner $\bfR_1^{-1}$
\STATE $\bfy_1 = \bfR_{1}^{-1}\bfu_1$
\STATE $\bfv_1=  \bfA\t \bfy_1 / [\bfT_{k+1}]_{1,1}$ with $[\bfT_{k+1}]_{1,1}=\| \bfA\t \bfy_1 \|_2$
\FOR {$i=1,\dots,k$}
\STATE $\bfu= \bfA\bfv_i$
\FOR{$j = 1, \ldots,i$}
  \STATE $\bfu=\bfu-[{\bfM}_k]_{j,i}\bfu_j$, with $[{\bfM}_k]_{j,i}=\bfu\t \bfu_j$
\ENDFOR
\STATE $\bfu_{i+1}=\bfu/[{\bfM}_k]_{i+1,i}$, with $[{\bfM}_k]_{i+1,i}=\|\bfu\|_2$
\STATE Update the preconditioner $\bfR_{i+1}^{-1}$
\STATE $\bfy_{i+1} = \bfR_{i+1}^{-1}\bfu_{i+1}$
\STATE $\bfv = \bfA\t \bfy_{i+1}$
\FOR{$j = 1, \ldots,i$}
  \STATE $\bfv=\bfv-[\bfT_{k+1}]_{j,i+1}\bfv_j$, with $[\bfT_{k+1}]_{j,i+1}=\bfv\t \bfv_j$
\ENDFOR
\STATE\label{line:vupdate} $\bfv_{i+1}=\bfv / [\bfT_{k+1}]_{i+1,i+1}$ with $[\bfT_{k+1}]_{i+1,i+1}=\|\bfv\|_2$ 
\ENDFOR\\
\ENSURE $\bfV_{k+1},\,\bfU_{k+1},\, \bfY_{k+1}, \,\bfM_k,\,\bfT_{k+1}$
\end{algorithmic}
\end{algorithm}

Finally, we consider the following inexact (iGK) version of \eqref{eq:FGK_1}, \eqref{eq:FGK_2}: 
\begin{equation}\label{eq:iGK}
\bfA\bfV_k = \bfU_{k+1}\bfM_{k},\quad
(\bfA+{\calE}_{k+1}^{(k)})\t\bfR_{k+1}^{-1}\bfU_{k+1}=\bfV_{k+1}\bfT_{k+1}\,,
\end{equation}
where ${\calE}_{k+1}^{(k)}$ is a matrix incorporating some iteration-dependent inexactness in $\bfA\t\bfR_{k+1}^{-1}$, defined as:
\begin{equation} \label{eq:errors1}
{\calE}_{k+1}^{(k)}=\bfR_{k+1}\left(\sum_{i=1}^{k+1}\bfu_i\bfu_i\t(\bfE_i^{(k)})\right)\bfA\,, 
\quad \text{with} \quad 
\bfE_i^{(k)}:=\bfR_i^{-1}-\bfR_{k+1}^{-1}\,.    
\end{equation}
Note that, in the above definition, we have used the facts that both $\bfE_i^{(k)}$ and $\bfR_{k+1}$ are symmetric, and that the latter is positive definite. 
All the matrices $\bfE_i^{(k)}$, 
$i = 1,\dots,k+1$, change at each iteration $k$, leading to an iteration-dependent ${\calE}_{k+1}^{(k)}$. The matrices $\bfV_{k}$, $\bfV_{k+1}$, $\bfU_{k+1}$, ${\bfM}_k$ and $\bfT_{k+1}$ are again those appearing in \eqref{eq:FGK_1}.

A fundamental conceptual difference between flexible factorizations of the kind \eqref{eq:FGK_1}, \eqref{eq:FGK_2} and the inexact factorization \eqref{eq:iGK} is that, in the former, the matrices $\bfR_i^{-1}$ are just, formally, iteration-dependent preconditioners while, in the latter, one needs to define inexactness and errors with respect to an ideally exact matrix. 
Fixing some notion of inexactness is useful also to recast this problem in the framework of the inexact Krylov solvers presented in \cite{Gazzola2021inexact}. Coherently with \cite{Gazzola2021inexact}, for reasons that will be clear later (and related to the IRLS framework introduced in \Cref{sec: Intro}), we regard the SPD matrices $\bfR_1^{-1},\dots,\bfR_{k}^{-1}$ as inexact versions of $\bfR_{k+1}^{-1}$; this is also reflected in the fact that, in \eqref{eq:iGK}, $\bfR_{k+1}^{-1}$ appears as an exact right preconditioner for $\bfA\t$. With respect to the inexact factorizations introduced in \cite{Gazzola2021inexact}, \eqref{eq:iGK} does not allow inexactness in the matrix-vector products with $\bfA$ and includes the preconditioner $\bfR_{k+1}^{-1}$. However, factorizations \eqref{eq:FGK_1}, \eqref{eq:FGK_2} and \eqref{eq:iGK} (given the errors defined in \eqref{eq:errors1}) are all mathematically equivalent and, in the following, we most often refer to the approximation subspace
\begin{equation}\label{def: RV_k}
\begin{split}
\calR(\bfV_k)&= 
\calK_k(\bfA\t\bar\bfR_{k+1}\bfA,\bfA\t\bar\bfR_{k+1}\hat{\bfu})\\
&= 
\calK_k((\bfA+{\calE}_{k+1}^{(k)})\t\bfR_{k+1}^{-1}\bfA,(\bfA+{\calE}_{k+1}^{(k)})\t\bfR_{k+1}^{-1}\hat{\bfu})
\end{split}
\end{equation}
associated to any of them as flexible/inexact Krylov subspace, and the solvers defined by imposing that the $k$th approximate solution $\bfx_k$ belongs to $\calR(\bfV_k)$ and some constraints on the residual as flexible/inexact Krylov methods. 

Finally, for both the factorizations \eqref{eq:FGK_2} and \eqref{eq:iGK} (but not \eqref{eq:FGK_1}), we can derive the following flexible/inexact Lanczos-like relationships, respectively:
\begin{equation}\label{eq:iLanczos}
\begin{array}{rcl}
\bfA\t\bar{\bfR}_{k+1}\bfA\bfV_k & = & \bfV_{k+1}\widehat{\bfH}_k\\
    (\bfA + {\calE}_{k+1}^{(k)})\t\bfR_{k+1}^{-1}\bfA\bfV_k & =& \bfV_{k+1}\widehat{\bfH}_k
    \end{array},
\quad\mbox{where}\quad\widehat{\bfH}_k:=\bfT_{k+1}\bfM_k\,.
\end{equation}

Recall that $\bar{\bfR}_{k+1}$ and ${\calE}_{k+1}^{(k)}$ are two alternative ways of expressing the same error or inexactness, and are defined in \eqref{defRkbar} and \eqref{eq:errors1}, respectively. 

\begin{remark}\label{rem: iArnoldi}
Analogously to what already observed in \cite{Gazzola2021inexact}, \eqref{eq:iLanczos} is not equivalent to applying inexact Arnoldi to the  iteration-dependent system matrix $\bfA\t\bfR_{i}^{-1}\bfA$ approximating the exact system matrix $\bfA\t\bfR_{k+1}^{-1}\bfA$. Indeed, the latter compactly reads:
\[
\left(\bfA\t \bfR_{k+1}^{-1} \bfA + \widetilde{\calE}^{(k)}_{k}  \right) \bfV_k =
\bfV_{k+1} \bfH_{k}, \quad \text{where}
\quad 
\widetilde{\calE}^{(k)}_{k} = \bfA\t \sum_{i=1}^{k} \bfE^{(k)}_{i} \bfA \bfv_{i} \bfv_{i}\t\,, \]
here the matrix $\bfH_k\in\bbR^{(k+1)\times k}$ is upper Hessenberg and the matrices $\bfE_i^{(k)}$ are defined in \eqref{eq:errors1}. The above factorization can be easily derived starting from an inexact Arnoldi relationship of the kind:
\[
    [\bfA\t \bfR_{1}^{-1} \bfA \bfv_{1},\dots,\bfA\t \bfR_{k}^{-1} \bfA \bfv_{k}] = \bfV_{k+1}\bfH_k\,,
\]
where the matrix on the left can be spelled out as:
\begin{eqnarray*}
    \bfA\t \bfR_{k+1}^{-1}\bfA \bfV_{k} + \bfA\t[\bfE_1^{(k)} \bfA \bfv_1,\dots,\bfE_{k}^{(k)} \bfA \bfv_{k}]
    =  \bfA\t \bfR_{k+1}^{-1}\bfA \bfV_{k} + \widetilde{\calE}^{(k)}_{k} \bfV_{k}.
\end{eqnarray*}   
\end{remark}

We conclude this section by emphasizing that, even if the factorizations hereby introduced allow any initial vector $\hat\bfu\in\bbR^m$ to be used, in the following we always assume that $\hat\bfu$ is related to $\bfr_0=\bfb - \bfA \bfx_0$, where $\bfx_0$ is an initial guess for the solution of \eqref{eq: linsys}. Moreover, although we refer to the matrices $\bfR_i^{-1}$, $i=1,\dots,k+1$, as preconditioners, in the following they are not intended with the classical purpose of accelerating the convergence of iterative solvers; they rather stem from the IRLS method applied to \eqref{eq: LSR} or \eqref{eq: LSp} and, as highlighted in \Cref{sec: Intro}, their purpose is to improve the quality of the approximations to the solution of \eqref{eq: linsys}. Still in this setting, the matrices $\bfR_i^{-1}$, $i=1,\dots,k+1$, are updated using residual vectors, noting that, for the considered solvers, the solutions and residuals at the $i$th iteration, $i\leq k$, can be computed only after $\bfv_{i+1}$ in line \ref{line:vupdate} of Algorithm \ref{alg: FGK} has been computed.

\section{Standard, flexible and inexact solvers for general data-fitting problems}\label{sect:new_solvers}
We slightly reformulate \eqref{eq: LSR} as the problem of computing $\bar{\bfx}^\ast=\bfx_0+\bfx^\ast$, where a correction 
\begin{equation}\label{eq: FitPb}
\bfx^\ast=\argmin_{\bfx\in\bbR^n}g(\bfx)
\end{equation}
is computed to form an improved approximation $\bar\bfx^\ast$ to $\bfx\true$ in \eqref{eq: linsys}, and where
\begin{equation}\label{eq: g}
\begin{array}{rl}
g(\bfx) &=\nicefrac{1}{2}\left(\bfx\t\bfA\t\bfR^{-1}\bfA\bfx - 
2\bfx\t\bfA\t\bfR^{-1}\bfr_0 
+\bfr_0\t\bfR ^{-1}\bfr_0\right)\vspace{0.1cm}\\ 
&= \nicefrac{1}{2}\|\bfA\bfx - \bfr_0\|_{\bfR^{-1}}^2\,.
\end{array}
\end{equation}
We will refer to $g(\bfx)$ as \textit{exact functional}. Problem \eqref{eq: LSR} is obtained by setting $\bfx_0=\bfzero$ in \eqref{eq: FitPb}. 

If the matrix $\bfR$ is given, then one can apply the mathematically equivalent preconditioned CGLS or LSQR to efficiently solve this problem. We will dwell on this case in Section \ref{sect:PCGLS_PLSQR}. However, as explained in Section \ref{sec: Intro}, there are relevant situations where $\bfR$ is (at least partially) unknown, and we have to update $\bfR$ based on current approximations to $\bfx\true$. We will dwell on this case in Section~\ref{sect:inexact}.

In describing all the solvers we rely on the overarching framework for Krylov solvers introduced in \cite{Eiermann_Ernst_2001}. That is, we classify them as either orthogonal `residual' (O-`R') or minimal `residual' (M-`R') methods, with the term `residual' to be broadly intended. More precisely, at the $k$th iteration of each solver, given an approximation subspace $\check{\calC}_k$ for the correction $\bfx^\ast$, an approximation subspace $\check{\calW}_k$ for the residual correction, and a possible subspace of constraints $\check{\calV}_k$ (both dependent on $\check{\calC}_k$), we determine a residual update $\check{\bfw}_k^{\text{\scriptsize{O}}}$ or $\check{\bfw}_k^{\text{\scriptsize{M}}}$ belonging to $\check{\calW}_k$ such that
\[
\begin{array}{ccll}
\check{\bfr}_0-\check{\bfw}_k^{\text{\scriptsize{O}}}&\perp& \check{\calV_k}, &\quad\mbox{for O-`R' methods,}\vspace{0.1cm}\\
\|\check{\bfr}_0-\check{\bfw}_k^{\text{\scriptsize{M}}}\|&=&\min_{\check{\bfw}\in\check{\calW_k}}\|\check{\bfr}_0-\check{\bfw}\|_2, &\quad\mbox{for M-`R' methods.}\\
\end{array}
\]
Equivalently, $\check{\bfw}_k^{\text{\scriptsize{O}}}$ is the oblique projection of $\check\bfr_0$ onto $\check{\calW}_k$, orthogonal to $\check{\calV}_k$ and $\check{\bfw}_k^{\text{\scriptsize{M}}}$ is the orthogonal projection of $\check\bfr_0$ onto $\check{\calW}_k$ (so that $\check{\calV}_k=\check{\calW}_k$). 
In Table \ref{tab:meth_summary} we provide an upfront summary of all the methods considered in this section, specifying the spaces $\check{\calC}_k$, $\check{\calW}_k$, and $\check{\calV}_k$, and the definition of the `residual' $\check{\bfr}_0$. Note that the naming conventions used in the first column are motivated and detailed in the next subsections.
\begingroup
\renewcommand{\arraystretch}{1.5} % Default value: 1
\setlength{\tabcolsep}{2pt} % Default value: 6pt
\begin{table}
\caption{Summary of all the methods considered in \Cref{sect:new_solvers}, using the compact notation introduced in the first row, with $\simeq$ indicating that a stated property only approximately holds. Among the solvers, the following are mathematically equivalent: LSQR and CGLS; DAP, DPA and PDA; APD, PAD and ADP.}
    \label{tab:meth_summary}
    \footnotesize
    \centering
    \begin{tabular}{|c|c|c|c|c|c|}\hline
    \textsc{Notations}:&\multicolumn{5}{l|}{ $\whAEk=(\bfA+\calEk)\t\Rki\bfA,\:$ $\AEkT=(\bfA+\calEk)\t\Rki,\:$ $\AcalE=\bfA+\calEk$}\\\hline
    \textbf{solver} & \textbf{category} & $\check{\calC}_k$ & $\check{\calW}_k$ & $\check{\calV}_k$ & $\check{\bfr}_0$\\\hline
    LSQR     & M-$g$ & $\calK_k(\bfA\t\bfR^{-1}\bfA,\bfA\t\bfR^{-1}\bfr_0)$ & $\bfR^{-\nicefrac{1}{2}}\bfA\check{\calC}_k$ & $\check{\calW}_k$ & $\bfR^{-\nicefrac{1}{2}}\bfr_0$\\\hline
    CGSL & O-$\nabla g$ &$\calK_k(\bfA\t\bfR^{-1}\bfA,\bfA\t\bfR^{-1}\bfr_0)$ & $\bfA\t\bfR^{-1}\bfA\check{\calC}_k$ &
    
$\check{\calC}_k$ & $\bfA\t\bfR^{-1}\bfr_0$\\\hline
DAP & O-$\widehat{\nabla g^{(k)}}$ & $\calK_k(\whAEk,\AEkT\bfr_0)$ & 
$\whAEk\check{\calC}_k$ & $\check{\calC}_k$ & $\AEkT\bfr_0$\\\hline
DPA &$\simeq$ O-${\nabla g^{(k)}}$ & $\calK_k(\whAEk,\AEkT\bfr_0)$ &
$\bfA\t\Rki\bfA\check{\calC}_k$ & $\check{\calC}_k$ & $\bfA\t\Rki\bfr_0$\\\hline
PDA & $\simeq$ M-$g^{(k)}$ & 
$\calK_k(\whAEk,\AEkT\bfr_0)$ & $\bfR_{k+1}^{-\nicefrac{1}{2}}\bfA\check{\calC}_k$ & 
$\bfR_{k+1}^{-\nicefrac{1}{2}}\AcalE\check{\calC}_k$ & $\bfR_{k+1}^{-\nicefrac{1}{2}}\bfr_0$ \\\hline
DAP-LSMR & M-$\widehat{\nabla g^{(k)}}$ & $\calK_k(\whAEk,\AEkT\bfr_0)$ & 
$\whAEk\check{\calC}_k$ & $\whAEk\check{\calC}_k$ & $\AEkT\bfr_0$\\\hline
APD, PAD & M-$\bar{g}^{(k)}$ & $\calK_k(\bfA\t\bRk\bfA,\bfA\t\bRk\bfr_0)$ & $\bfA\t\bRSk\bfA\check{\calC}_k$ & $\check{\calC}_k$ & $\bfA\t\bRk^{(S)}\bfr_0$\\\hline
ADP & M-$\nabla\bar{g}^{(k)}$ & 
$\calK_k(\bfA\t\bRk\bfA,\bfA\t\bRk\bfr_0)$ & $\bfA\t\bRSk\bfA\check{\calC}_k$ & $\check{\calC}_k$ & $\bfA\t\bRk^{(S)}\bfr_0$\\\hline
    \end{tabular}
\end{table}
\endgroup 

\subsection{Exact Krylov solvers for the exact functional}\label{sect:PCGLS_PLSQR}

Solving problem~\eqref{eq: FitPb} for a fixed known $\bfR$ can be done either using preconditioned CGLS, or preconditioned LSQR, which are mathematically equivalent; see \cite[Chapter 4]{Bjo15}. In this setting, the `preconditioner' $\bfR^{-1}$ is equivalently applied to the right of $\bfA\t$ or to the left of $\bfA$.

In particular, preconditioned CGLS solves the normal equations associated to \eqref{eq: LSR} using conjugate gradient. We focus on the interpretation of CGLS as a `first differentiate, then project' approach, i.e., the gradient of $g(\bfx)$ in \eqref{eq: g} or, equivalently, the normal equation residual, is projected onto a Krylov subspace. Specifically, at the $k$th iteration, CGLS computes an update for the solution $\bar{\bfx}_k = \bfx_0+\bfx_k$, where $\bfx_k \in \calR(\bfV_k)=\calK_k(\bfA\t\bfR^{-1}\bfA,\bfA\t\bfR^{-1}\bfr_0)$, i.e., $\bfx_k =\bfV_k\bfs_k$ and $\bfs_k\in\bbR^k$ satisfies
\begin{equation}\label{eq:NE1_Galerkin}
\nabla g(\bfx_k)=\bfA\t \bfR^{-1} (\bfA\bfx_k-\bfr_0) \perp \mathcal{R}(\bfV_{k}) 
\,\Longleftrightarrow\,\bfV_{k}\t\bfA\t 
\bfR^{-1}
(\bfA\bfV_k\bfs_k-\bfr_0)=\bfzero\,.
\end{equation}

According to the framework in \cite{Eiermann_Ernst_2001}, preconditioned CGLS is an `orthogonal residual' (OR) method applied to the normal equations of \eqref{eq: LSR}, so we more properly refer to it as an `orthogonal normal equation residual' (O-$\nabla g$) method. 

As such, preconditioned CGLS performs a projection procedure, which determines $\bfA\t\bfR^{-1}\bfA\bfx_k$ (i.e., the image of the correction $\bfx_k$ under $\bfA\t\bfR^{-1}\!\bfA$, belonging to $\bfA\t\bfR^{-1}\!\bfA\calK_k(\bfA\t\bfR^{-1}\!\bfA,\bfA\t\bfR^{-1}\!\bfr_0)$) by an oblique projection of $\bfA\t\bfR^{-1}\bfr_0$ (orthogonal to $\calK_k(\bfA\t\bfR^{-1}\bfA,\bfA\t\bfR^{-1}\bfr_0)$). 
%%%% ARXIVE -- START %%%%
In the framework presented in \cite{Bjo15, saad2003iterative}, preconditioned CGLS is regarded as an orthogonal projection method for the preconditoned normal equations, since it uses $\calK_k(\bfA\t\bfR^{-1}\bfA,\bfA\t\bfR^{-1}\bfr_0)$ as both approximation subspace for the solution $\bfx_k$ and constraint subspace for the normal equation residual.  
%%%% ARXIVE -- END %%%%

The mathematically equivalent preconditioned LSQR method can be interpreted as a `first project, then differentiate' approach. In this case, we directly minimize $g(\bfx)$ in \eqref{eq: g} over $\calR(\bfV_k)=\calK_k(\bfA\t\bfR^{-1}\bfA,\bfA\t\bfR^{-1}\msl{\bfr_0})$, i.e., we consider the restriction $\bfx=\bfV_k\bfs$ and optimize over the coefficients $\bfs\in\bbR^k$. In formulas, we compute
\begin{equation}\label{eq:NE1_PetrovGalerkin}
\bfs_k=\argmin_{\bfs\in\bbR^k}\underbrace{\nicefrac{1}{2}\|\bfA\bfV_k\bfs-\bfr_0\|_{\bfR^{-1}}^2}_{=:g_k(\bfs)}\quad\Longleftrightarrow\quad\underbrace{\bfV_k\t\bfA\t\bfR^{-1}(\bfA\bfV_k\bfs_k-\bfr_0)}_{=\,\nabla g_k(\bfs_k)}=\bfzero\,,
\end{equation}
and take $\bfx_k=\bfV_k\bfs_k$. 
According to the framework in \cite{Eiermann_Ernst_2001}, preconditioned LSQR is a `minimal residual' (MR) method or, equivalently, it is based on a projection procedure that determines $\bfR^{-\nicefrac{1}{2}}\bfA\bfx_k$ by performing an orthogonal projection of $\bfR^{-\nicefrac{1}{2}}\bfr_0$ onto 
$\bfR^{-\nicefrac{1}{2}}\bfA\calK_k(\bfA\t\bfR^{-1}\bfA,\bfA\t\bfR^{-1}\bfr_0)$. \msl{Equivalently, $\bfA\bfx_k$ is the  orthogonal projection of $\bfr_0$ onto $\bfA\calK_k(\bfA\t\bfR^{-1}\bfA,\bfA\t\bfR^{-1}\bfr_0)$ in the $\bfR^{-1}$-weighted inner product. } 
However, since preconditioned LSQR operates on the objective function $g(\bfx)$ in \eqref{eq: g} (i.e., a weighted residual norm) rather than on the residual norm, we more properly refer to it as a `minimal $g$' (M-$g$) method. 
%%%% ARXIVE -- START %%%%
In the framework presented in \cite{Bjo15, saad2003iterative}, whereby preconditioned LSQR is regarded as an oblique projection method \msl{for the preconditioned system} having  $\calK_k(\bfA\t\bfR^{-1}\bfA,\bfA\t\bfR^{-1}\bfr_0)$ as approximation subspace for the solution $\bfx_k$ and $\bfA\calK_k(\bfA\t\bfR^{-1}\bfA,\bfA\t\bfR^{-1}\bfr_0)$ as constraint subspace for the preconditioned residual $\bfR^{-1}(\bfA\bfx_k-\bfr_0)$. 
%%%% ARXIVE -- END %%%%
Following projection, differentiation is performed in \eqref{eq:NE1_PetrovGalerkin} to impose the optimality conditions, leading to the solution of a linear system with SPD coefficient matrix $\bfV_k\t\bfA\t\bfR^{-1}\bfA\bfV_k$. 
We refer to $g_k(\bfs)$ in \eqref{eq:NE1_PetrovGalerkin} as the \textit{$\calR(\bfV_k)$-restricted exact functional}. 

In summary, for both preconditioned CGLS and LSQR, at iteration $k$ we obtain an approximate solution $\bfx_k=\bfV_k \bfs_k$ of \eqref{eq: FitPb} by computing the coefficients $\bfs_k$ that equivalently solve \eqref{eq:NE1_Galerkin} and \eqref{eq:NE1_PetrovGalerkin}, respectively. Let us consider the relations in \eqref{eq:FGK_1}, \eqref{eq:FGK_2} for fixed $\bfR_i^{-1}=\bfR^{-1}$, which simplify to 
\begin{equation}\label{eq: pGKB}
\bfA\bfV_k=\bfU_{k+1}\bfB_{k+1,k},\quad \bfA\t\bfR^{-1}\bfU_{k+1}=\bfV_{k+1}\bfB_{k+1}\t\,,
\end{equation}
where $\bfV_{k}$, $\bfV_{k+1}$, $\bfU_{k+1}$ are as in \eqref{eq:FGK_1}, \eqref{eq:FGK_2}, $\bfB_{k+1}\in\bbR^{(k+1)\times (k+1)}$ is lower bidiagonal and $\bfB_{k+1,k}$ is obtained by removing the last column of $\bfB_{k+1}$. 
Exploiting \eqref{eq: pGKB} and taking $\alpha_1:=[\bfB_{k+1}]_{1,1}$ and $\beta:=\|\bfr_0\|_{2}$, 
both the linear systems on the left of \eqref{eq:NE1_Galerkin} and \eqref{eq:NE1_PetrovGalerkin} simplify to
%%% EXPLAINING THE FACTORIZATIONS
% \begin{eqnarray*}
% \bfV_k\t\bfA\t\bfR^{-1}\bfr_0 &=& \bfV_{k}\t\bfA\t\beta\bfy_1 
% =\beta\bfV_k\t\bfV_{k+1}\bfB_{k+1}\t\bfe_1=\beta\alpha_1\bfe_1,\smallskip
% \end{eqnarray*}
% for the first term in the middle of \eqref{eq:LSQRgrad}, and 
% % \begin{eqnarray*}
% % \bfV_k\t\bfA\t\bfR^{-1}\bfA\bfV_k\bfs &=& \bfV_k\t\bfA\t\bfR^{-1}\bfU_{k+1}\bar{\bfM}_k\bfs = \bfV_k\t\bfA\t\bfY_{k+1}\bar{\bfM}_k\bfs\\
% % &=&\bfV_k\t\bfV_{k+1}\bfT_{k+1}\bar{\bfM}_k\bfs=\underbrace{[\bfI_k, \bfzero]\bfT_{k+1}}_{\widetilde{\bfT}_{k+1}}\bar{\bfM}_k\bfs,
% % \end{eqnarray*}
% \begin{eqnarray*}
% \bfV_k\t\bfA\t\bfR^{-1}\bfA\bfV_k\bfs = \bfV_k\t\bfA\t\bfR^{-1}\bfU_{k+1}\bar{\bfM}_k\bfs = \bfV_k\t\bfA\t\bfY_{k+1}\bar{\bfM}_k\bfs
% =[\bfI_k, \bfzero]\bfB_{k+1}\t\bar{\bfB}_k\bfs,
% \end{eqnarray*}
% for the first term in the middle of \eqref{eq:LSQRgrad}. We can then compute $\bfs_k\in\bbR^k$ such that
\[
{\bfB}_{k+1,k}\t{\bfB}_{k+1,k} \bfs_k = \alpha_1\beta\bfe_1\,.
\]

Note, however, that common implementations of preconditioned CGLS and LSQR do not strictly follow the framework outlined above, with the former relying on three-term recurrences and the latter relying of smart updates of the QR factorization of ${\bfB}_{k+1,k}$; see \cite[Chapter 4]{Bjo15} for more details. In the following, we will refer to solvers that first differentiate  and then project as `CGLS-like'; we will refer to solvers that first project and then differentiate as `LSQR-like'. Coherently with the terminology already adopted in this subsection, whenever we are concerned with the minimization of a functional $g(\bfx)$, in the following we will often refer to $\nabla g(\bfx)$ as `normal equations residual'.

\subsection{Mixing and matching differentiation, projection, approximation}\label{sect:inexact} This section concerns the approximation of the solution of problem \eqref{eq: FitPb}, with an (at least partially) unknown $\bfR$ in \eqref{eq: g} that is updated at each iteration of a solver for \eqref{eq: FitPb}. To handle this situation, we present a new framework whereby flexible/inexact projection methods can be formulated: this encompasses some known solvers, and sets the stage for the introduction of new ones. Building on the basic operations of `projection' (P) and `differentiation' (D) already introduced in Section~\ref{sect:PCGLS_PLSQR}, we also consider the effect of approximation (A): flexible/inexact Krylov methods differ in the order in which such operations are performed, so that we name each method after permutations of the letters D, A, P. 
In particular, the relative order of A and D will give rise to different methods, while the relative order between D and P will characterize mathematically equivalent CGLS-like and LSQR-like methods.  

\subsubsection{DAP, DPA and PDA methods, i.e., `classical' inexact Krylov solvers}\label{sect: inexact1}
As we consider $\bfR_k^{-1}$ being iteration-dependent, at the $k$th iteration of a solver for \eqref{eq: FitPb} we may consider finding an approximate minimizer of the iteration-dependent objective function
\begin{equation}\label{eq: gk_exact}
\begin{array}{rl}
g^{(k)}(\bfx) &= \nicefrac{1}{2}\left(\bfx\t\bfA\t\bfR_{k+1}^{-1}\bfA\bfx - 
2\bfx\t\bfA\t\bfR_{k+1}^{-1}\bfr_0 
+\bfr_0\t\bfR_{k+1} ^{-1}\bfr_0\right)\vspace{0.1cm}\\ 
&= \nicefrac{1}{2}\|\bfA\bfx - \bfr_0\|_{\bfR_{k+1}^{-1}}^2,
\end{array}
\end{equation}
which we call the \textit{exact functional at the $k$th iteration}. This choice also matches the IRLS setting mentioned in \Cref{sec: Intro}, as we would define as current reweighted least squares problem the one with weights computed with respect to the most recent approximation to $\bfx^\ast$ in \eqref{eq: FitPb}. 

We start by considering a method that first differentiates (D) the objective function \eqref{eq: gk_exact}, then approximates (A) its gradient and eventually projects (P) it onto an approximation subspace, i.e., a DAP method. That is, we compute
\begin{equation}\label{eq:inexact_gradient}
\nabla g^{(k)}(\bfx) = \bfA\t \bfR_{k+1}^{-1} (\bfA\bfx-\bfr_0)\; \approx\; \widehat{\nabla g^{(k)}}(\bfx):=(\bfA + {\calE}^{(k)}_{k+1})\t \bfR_{k+1}^{-1} (\bfA\bfx-\bfr_0)
\end{equation}
and then apply a projection method involving the approximate gradient (or inexact normal equations residual) $\widehat{\nabla g^{(k)}}(\bfx)$. 
Specifically, at the $k$th iteration of such solver, we compute a solution \begin{equation}\label{def:xsubspinex}
\bfx_k \in \calR(\bfV_k)=\calK_k((\bfA+{\calE}_{k+1}^{(k)})\t\bfR_{k+1}^{-1}\bfA,(\bfA+{\calE}_{k+1}^{(k)})\t\bfR_{k+1}^{-1}\bfr_0),
\end{equation}
such that
\begin{equation}\label{eq:NE1_Galerkin_inexact}
\widehat{\nabla g^{(k)}}(\bfx_k)=(\bfA + {\calE}^{(k)}_{k+1})\t \bfR_{k+1}^{-1} (\bfA\bfx_k-\bfr_0) \perp \mathcal{R}(\bfV_{k}),
\end{equation}
or, equivalently, taking $\bfx_k=\bfV_k\bfs_k$,
\begin{equation}\label{iCGLS}
\bfV_k\t(\bfA+{\calE}^{(k)}_{k+1})\t\bfR_{k+1}^{-1}\bfA\bfV_k\bfs_k=\bfV_k\t(\bfA+{\calE}^{(k)}_{k+1})\t\bfR_{k+1}^{-1}\bfr_0.
\end{equation}
In practice, using the factorizations \eqref{eq:iGK}, the left-hand-side of the above equation reads
\begin{eqnarray*}
\bfV_k\t(\bfA+\calEk)\t\bfR_{k+1}^{-1}\bfU_{k+1}{\bfM}_k\bfs_k = \bfV_k\t\bfV_{k+1}\bfT_{k+1}{\bfM}_k\bfs_k
=[\bfI_k, \bfzero]\bfT_{k+1}{\bfM}_k\bfs_k,
\end{eqnarray*}
while the right-hand-side reads
\begin{eqnarray}\label{eq:projrhs}
\bfV_{k}\t(\bfA+\calEk)\t\bfR_{k+1}^{-1}\beta\bfu_1 
=\beta\bfV_k\t\bfV_{k+1}\bfT_{k+1}\bfe_1=\beta t_{1,1}\bfe_1,
\end{eqnarray}
leading to the following projected linear system
\begin{equation}\label{eq:projected_classic}
{\bfT}_{k,k+1}\bfM_k\bfs_k=\beta t_{1,1}\bfe_1,
\end{equation}
where ${\bfT}_{k,k+1}$ is $\bfT_{k+1}$ without its last row, $\beta=\|\bfr_0\|_{2}$ and $t_{1,1}=[\bfT_{k+1}]_{1,1}$. The DAP method is a CGLS-like method and, indeed, the above formulation coincides with the inexact CGLS (iCGLS) method proposed in \cite{Gazzola2021inexact}. In the framework of \cite{Eiermann_Ernst_2001}, such DAP method may be regarded as an `orthogonal inexact normal equation residual' (O-$\widehat{\nabla g^{(k)}}$) method that determines $(\bfA + {\calE}^{(k)}_{k+1})\t \bfR_{k+1}^{-1}\bfA\bfx_k$ by performing an oblique projection of $(\bfA + {\calE}^{(k)}_{k+1})\t \bfR_{k+1}^{-1}\bfr_0$ onto $(\bfA+\calE_{k+1}^{(k)})\t\bfR_{k+1}^{-1} \bfA\calR(\bfV_k)$, orthogonal to  $\calR(\bfV_k)$. 
%%%% ARXIVE -- START %%%%
In the framework of \cite{Bjo15, saad2003iterative} such DAP method is an orthogonal projection method having $\calR(\bfV_k)=\calK_k((\bfA + {\calE}^{(k)}_{k+1})\t \bfR_{k+1}^{-1}\bfA,(\bfA + {\calE}^{(k)}_{k+1})\t \bfR_{k+1}^{-1}\bfr_0)$ as both approximation subspace for the solution $\bfx_k$ and constraint subspace for the inexact normal equation residual $\widehat{\nabla g^{(k)}}(\bfx_k)$. 
%%%% ARXIVE -- END %%%%

Equivalently, after differentiating the functional \eqref{eq: gk_exact}, we can compute an approximate projection of $\nabla g^{(k)}(\bfx)$ onto the inexact Krylov subspace $\calR(\bfV_k)$, i.e., we differentiate (D), project (P) and then approximate (A), to obtain a DPA method. In formulas, we still take $\bfx_k=\bfV_k\bfs_k\in\calR(\bfV_k)$ as in \eqref{def:xsubspinex} but, differently from \eqref{eq:NE1_Galerkin_inexact}, determine $\bfs_k\in\bbR^k$ by imposing the following orthogonality condition on the normal equation residual $\nabla g^{(k)}(\bfx)$ (associated to the minimization of the iteration-dependent functional $g^{(k)}(\bfx)$ in  \eqref{eq: gk_exact})
\begin{equation}\label{eq:NE2_Galerkin_inexact}
\underbrace{\bfA\t \bfR_{k+1}^{-1} (\bfA\bfx_k-\bfr_0)}_{=\nabla g^{(k)}(\bfx_k)}=\bfA\t\Rki(\bfA\bfV_k\bfs_k-\bfr_0) \perp \mathcal{R}(\bfV_{k})\,.
\end{equation}
Exploiting again the factorizations \eqref{eq:iGK}, with algebraic manipulations similar to the ones performed for the DAP method, we obtain
\[
({\bfT}_{k,k+1}\bfM_k - \underbrace{\bfV_k\t({\calE}^{(k)}_{k+1})\t \bfR_{k+1}^{-1}\bfA\bfV_k}_{=:\widehat{\bfE}_{k+1}})\bfs_k=\beta t_{1,1}\bfe_1\,.
\]
Getting rid of the term $\widehat{\bfE}_{k+1}$ in the above parenthesis (i.e., considering an inexact projection), we recover problem \eqref{eq:projected_classic}. Such DPA method is still a CGLS-like method that, in the framework of \cite{Eiermann_Ernst_2001}, determines $\bfA\t\Rki\bfA\bfx_k$ by performing an inexact oblique projection of $\bfA\t\Rki\bfr_0$ onto $\bfA\t\Rki\bfA\calR(\bfV_k)$, orthogonal to $\calR(\bfV_k)$, and therefore can be regarded as an `approximate orthogonal normal equation residual' ($\simeq$ O-$\nabla g^{(k)}$) method. 

Finally, we may again take $\bfx=\bfV_k\bfs$, with $\bfs\in\bbR^{k}$ and $\bfV_k$ as in \eqref{def:xsubspinex}, in the iteration-dependent exact functional $g^{(k)}(\bfx)$ in \eqref{eq: gk_exact}, 
and consider a projection (P) analogous to the one underlying the M-$g$ method in Section \ref{sect:PCGLS_PLSQR}. That is, we take
\[
\bfs_k=\argmin_{\bfs\in\bbR^{k}}g_k^{(k)}(\bfs),\quad\mbox{where}\quad
g_k^{(k)}(\bfs)=\nicefrac{1}{2}\|\bfA\bfV_k\bfs-\bfr_0\|_{\bfR_{k+1}^{-1}}^2\,,
\]
which we call \emph{restricted exact functional at the $k$th iteration}. We then differentiate (D) to impose the optimality condition, to compute $\bfs_k\in\bbR^k$ by solving
\begin{equation}\label{eq: projNEgk}
\bfV_k\t\bfA\t\bfR_{k+1}^{-1}\bfA\bfV_k\bfs_k-\bfV_k\t\bfA\t\bfR_{k+1}^{-1}\bfr_0=\bfzero\,.
\end{equation}
Similarly to the DPA method, we rewrite
\[
(\underbrace{\bfV_k\t(\bfA+{\calE}^{(k)}_{k+1})\t\bfR_{k+1}^{-1}\bfA\bfV_k}_{{\bfT}_{k,k+1}\bfM_k} -
\underbrace{\bfV_k\t({\calE}^{(k)}_{k+1})\t \bfR_{k+1}^{-1}\bfA\bfV_k}_{=:\widehat{\bfE}_{k+1}})\bfs_k=\beta t_{1,1}\bfe_1
\]
and approximate (A) the above equation by dropping the term $\widehat{\bfE}_{k+1}$, 
eventually recovering again the projected problem \eqref{eq:projected_classic}. 
Therefore the PDA method so defined is mathematically equivalent to the previous DAP and DPA methods, and can be regarded as the LSQR-like version of them, or as an `approximate minimal $g^{(k)}$' ($\simeq$ M-$g^{(k)}$) method. 
In the framework of \cite{Eiermann_Ernst_2001}, the quantity $\bfR_{k+1}^{-\nicefrac{1}{2}} \bfA\bfx_k$ is now obtained as an oblique (rather than orthogonal) projection of $\bfR_{k+1}^{-\nicefrac{1}{2}} \bfr_0$ onto $\bfR_{k+1}^{-\nicefrac{1}{2}} \bfA \mathcal{R}(\bfV_k)$, orthogonal to $\bfR_{k+1}^{-\nicefrac{1}{2}}(\bfA +{\calE}^{(k)}_{k+1})\mathcal{R}(\bfV_k)$. Equivalently, $\bfA\bfx_k$ is now obtained as an  oblique (rather than orthogonal) projection of $\bfr_0$ onto $\bfA \mathcal{R}(\bfV_k)$, orthogonal to $(\bfA +{\calE}^{(k)}_{k+1})\mathcal{R}(\bfV_k)$, in the $\bfR_{k+1}^{-1}$-weighted inner product induced. These expressions clearly reflect that the obliqueness of the projections comes only from the inexactness (as they would be orthogonal if $\calEk=\bfzero$, and would coincide with the ones associated to LSQR applied to the minimization of $g^{(k)}(\bfx)$ in \eqref{eq: gk_exact}). 

A potential shortcoming of all these methods is that, further elaborating on \eqref{eq:projrhs}, the right-hand-side vector in formulations \eqref{iCGLS} and \eqref{eq:projected_classic}, and all the mathematically equivalent versions of them, reads
\[
\beta t_{1,1}\bfe_1=\beta\bfV_k\t(\bfA+\calEk)\t\bfR_{k+1}^{-1}\bfu_1=\beta\bfV_k\t\bfA\t\bfR_{1}^{-1}\bfu_1=\bfV_k\t\bfA\t\bfR_{1}^{-1}\bfr_0,
\]
so that the right-hand-side vector appearing in the projected inexact normal equations \eqref{eq:projected_classic} is effectively always computed using the initial $\bfR_1^{-1}$ and may be quite different from the vector $\bfV_k\t\bfA\t\bfR_{k+1}^{-1}\bfr_0$ that appears in the projected normal equations \eqref{eq: projNEgk} associated to the iteration-dependent exact functional $g^{(k)}(\bfx)$. 

\begin{remark}\label{rem:DAvsAD}
Note that $\widehat{\nabla g^{(k)}}(\bfx)$ cannot be regarded as the exact gradient of a certain functional $\widehat{ g}^{(k)}(\bfx)$ somewhat related to $g^{(k)}(\bfx)$, as the matrix $(\bfA + {\calE}^{(k)}_{k+1})\t \bfR_{k+1}^{-1} \bfA$ appearing in its first term is not symmetric in general; this is the case only if $(\calEk)\t\bfR_{k+1}^{-1}\bfA$ is symmetric. For this fundamental reason, performing approximation before differentiation leads to a different class of flexible/inexact solver that is described in the next subsection.
\end{remark}

\begin{remark}\label{rem: iLSMR} {The main focus of this paper is on minimal residual methods, where at least the classical methods described in \Cref{sect:PCGLS_PLSQR} minimize the (preconditioned) residual norm when the solution is restricted to a given subspace. However, the DAP and the other mathematically equivalently methods derived so far in this subsection can also be used to define an inexact LSMR method where, at the $k$th iteration, one determines the solution $\bfx_k$ by imposing the condition \eqref{def:xsubspinex} (to restrict $\bfx_k$ to the inexact Krylov approximation subspace) and by replacing the orthogonality condition \eqref{eq:NE1_Galerkin_inexact} by 
\begin{equation}\label{eq:LSMR_inexact}
\widehat{\nabla g^{(k)}}(\bfx)=(\bfA + {\calE}^{(k)}_{k+1})\t \bfR_{k+1}^{-1} (\bfA\bfx-\bfr_0)\perp (\bfA+\calE_{k+1}^{(k)})\t\Rki\bfA\mathcal{R}(\bfV_{k})\,.
\end{equation}
Using the new inexact GK factorization \eqref{eq:iGK}, the above conditions lead to
\[
\bfx_k=\bfV_k\bfs_k,\quad\mbox{where}\quad(\bfT_{k+1}\bfM_k)\t(\bfT_{k+1}\bfM_k)\bfs_k =\beta t_{1,1}(\bfT_{k+1}\bfM_k)\t\bfe_1\,.
\]
We refer to the method so defined as DAP-LSMR. In the framework of \cite{Eiermann_Ernst_2001}, DAP-LSMR determines $(\bfA+\calEk)\t\Rki\bfA\bfx_k\in(\bfA+\calEk)\t\Rki\bfA\calR(\bfV_k)$ 
as an orthogonal projection of $(\bfA+\calEk)\t\Rki\bfr_0$ onto $(\bfA+\calEk)\t\Rki\bfA\calR(\bfV_k)$, and therefore can be regarded as a `minimal $\widehat{\nabla g^{(k)}}$' (M-$\widehat{{\nabla}g^{(k)}}$) method.}
\end{remark}

\begin{remark}
The DAP, DPA and PDA methods do not enjoy any clear optimality property (i.e., in terms of minimization of inexact normal equation residuals $\widehat{\nabla g^{(k)}}(\bfx)$ in the approximation subspace $\calR(\bfV_k)$ for the solution). This was already observed in \cite{Gazzola2021inexact} for the iCGLS method, which is mathematically equivalent to the current DAP, DPA and PDA methods. Because of the oblique projection of $(\bfA + {\calE}^{(k)}_{k+1})\t \bfR_{k+1}^{-1}\bfr_0$ happening in \eqref{eq:NE1_Galerkin_inexact}, such methods can be regarded as inexact FOM versions, applied to the inexact normal equations. As explained in \Cref{rem: iLSMR}, the DAP-LSMR method minimizes the norm of the inexact normal equation residuals $\widehat{\nabla g^{(k)}}(\bfx)$ over the same subspace as the DAP, DPA and PDA methods and, therefore, may be regarded as an inexact GMRES applied to the inexact normal equations. For this reason, one may exploit known results about relations between the FOM and GMRES residual (norms) to assess how far the DAP, DPA and PDA inexact normal residual norms are from being optimal; see, e.g., \cite{FOM_GMRES}.
\end{remark}

\paragraph{Inexactness estimates} We conclude this section by providing inexactness estimates for the inexact normal equation residuals computed by the DAP, DPA and PDA methods, i.e., given an approximation $\bfx_k\in\calR(\bfV_k)$, we bound the difference between the exact $\nabla g^{(k)}(\bfx_k)$ and the inexact $\widehat{\nabla g^{(k)}}(\bfx_k)$, both appearing in \eqref{eq:inexact_gradient}. To do so, we adapt the bounds already provided in \cite{Gazzola2021inexact, Simoncini2003Inexact}. Taking $\bfE_i^{(k)}$ as in \eqref{eq:errors1}, 
\begin{eqnarray*}
\nabla g^{(k)}(\bfx_k)-\widehat{\nabla g^{(k)}}(\bfx_k)&=& 
-(\calEk)\t\Rki\bfA\bfV_k\bfs_k + (\calEk)\t\Rki\bfr_0\\
 &=&-\bfA\t[\bfE_1^{(k)}\bfu_1,\dots,\bfE_{k+1}^{(k)}\bfu_{k+1}]{\bfM}_k\bfs_k+\bfA\t\bfE_1^{(k)}\bfr_0
\end{eqnarray*}
we straightforwardly derive the bound
\begin{equation}\label{eq:bound1}
\|\nabla g^{(k)}(\bfx_k)-
\widehat{\nabla g^{(k)}}(\bfx_k) \|\leq
\|\bfA\t\bfE_1^{(k)}\bfr_0\|+\sum_{i=1}^{k+1}\|\bfA\t\bfE_i^{(k)}\||[\bar{\bfs}_k]_i|\,,
\end{equation}
where $\bar{\bfs}_k={\bfM}_k\bfs_k$.
Note that the theoretical bound in \eqref{eq:bound1} can be very expensive to compute: at the $k$th iteration it requires $k$ additional matrix-vector products with $\bfA\t$. Therefore, in practice, we can also consider computing the following cheaper but looser bound
\[
\|\nabla g^{(k)}(\bfx_k)-
\widehat{\nabla g^{(k)}}(\bfx_k)\|\leq
\|\bfA\t\|\|\bfE_1^{(k)}\bfr_0\|+\sum_{i=1}^{k+1}\|\bfA\t\|\|\bfE_i^{(k)}\||[\bar{\bfs}_k]_i|\,.
\]
Here $\|\bfA\t\|$ can be estimated by running only a few (standard) Golub-Kahan bidiagonalization iterations with $\bfA$, which could be also used to find an initial approximation $\bfx_0$ of $\bar{\bfx}^\ast$ in \eqref{eq: LSR}, \eqref{eq: LSp}. 

\begin{remark}\label{rem: rest1} 
\msl{Inexactness estimates are important for understanding how far the original problem is from the one being solved in practice. When the mismatch is too great, this can be rectified by means of a restart. In practice, this can have a great impact in the quality of the reconstructed solution. For this type of classical inexact methods, this is discussed in depth  \cite{Gazzola2021inexact}, where different restarting strategies are also specified.} Note that the mismatch for the DAP, DPA and PDA methods is always measured in terms of the normal equations residual norm.
\end{remark}

\subsubsection{APD, ADP and PAD solvers, i.e., new flexible Krylov solvers}\label{sect: inexact2}
In this subsection, we propose a new flexible/inexact Krylov framework, where the exact objective function $g(\bfx)$ in \eqref{eq: g} (or the exact functional at the $k$th iteration $g^{(k)}(\bfx)$ in \eqref{eq: gk_exact}) is first approximated \msl{(A)} by an iteration-dependent convex functional $\bar{g}^{(k)}(\bfx)$, and then differentiated \msl{(D)}. As in the previous subsection, projection (P) can either happen before or after differentiation, leading to LSQR-like and CGLS-like solvers, respectively, which are again mathematically equivalent (in the current framework). With respect to the methods presented in the previous subsection, the methods derived in the present subsection have the advantage of working with explicit optimality conditions for $\bar{g}^{(k)}(\bfx)$, \msl{meaning that we can characterize the solution at each iteration as the minimizer of $\bar{g}^{(k)}(\bfx)$ restricted to a solution space whose dimension increases with the iterations. 
For some constructions of $\bar{g}^{(k)}(\bfx)$, this allows us to theoretically control the difference between the value of the iteration-dependent objective function at each iteration and a desired exact function $f(\bfx)$; see Proposition \ref{lemma1}. Note that, in contrast, the traditional inexact methods in \Cref{sect: inexact1} can only query inexact gradient norms, i.e., inexact normal equations residual norms.} 

We start by defining the following \textit{inexact objective function at the $k$th iteration} 
\begin{equation}\label{def:itdep_obj}
\begin{split}
    \bar g^{(k)}(\bfx) :=& \nicefrac{1}{2}\left(\bfx\t\bfA\t\bar{\bfR}_{k+1}\bfA\bfx \!-\! \bfx\t\bfA\t\bar{\bfR}_{k+1}\bfr_0 \!-\!\bfx\t\bfA\t\bar{\bfR}_{k+1}\t\bfr_0 
+\bfr_0\t\bar{\bfR}_{k+1}\bfr_0\right)\\
=&\nicefrac{1}{2}\left(\bfx\t\bfA\t\bar{\bfR}_{k+1}^{(S)}\bfA\bfx - 2\bfx\t\bfA\t\bar{\bfR}_{k+1}^{(S)}\bfr_0
+\bfr_0\t\bar{\bfR}_{k+1}^{(S)}\bfr_0\right)\\
=&\:\nicefrac{1}{2}\left\|\bfA\bfx -\bfr_0\right\|_{\bar{\bfR}_{k+1}^{(S)}}^2,
\end{split}
\end{equation}
where the matrix $\bar\bfR_{k+1}$ is defined as in \eqref{defRkbar} and $\bar\bfR_{k+1}^{(S)}=\nicefrac{1}{2}(\bar{\bfR}_{k+1}+\bar{\bfR}\t_{k+1})$ is its symmetric part.
 
Compared to the function $g^{(k)}(\bfx)$ in \eqref{eq: gk_exact}, %(i.e., the exact functional at the $k$th iteration), 
the weights in $\bar{g}^{(k)}(\bfx)$ are defined with respect to the current and all the previous iteration-dependent weights $\bfR_{i}^{-1}$, $i=1,\dots,k+1$. Moreover, the rank of $\bRSk$ is at most $2k+2$ so that, unless $k$ is at least equal to the smallest integer greater or equal to $\nicefrac{m-2}{2}$ (which is undesirable in large-scale settings), $\bRSk$ is singular and $\left\|\cdot\right\|_{\bar{\bfR}_{k+1}^{(S)}}$ is a semi-norm. However we will see that, for all the solvers in this subsection, $\bar{g}^{(k)}(\bfx)$ is only evaluated for $\bfx\in\calR(\bfV_k)$ (the approximation subspace for the solution, defined in \eqref{def: RV_k} with $\hat{\bfu}_1=\bfr_0$), implying that $\left\|\cdot\right\|_{\bar{\bfR}_{k+1}^{(S)}}$ is only applied to vectors belonging to $\calR(\bfU_{k+1})$, and therefore it is a norm. Even if the approximation subspaces for the $k$th solution of all the flexible/inexact solvers considered in this and the previous sections coincide, the two classes of solvers are obviously different, as the initial problem formulations and the respective projected problems are different, leading to different solution coefficients (i.e., solutions of the projected problems) and, eventually, different solutions. This also agrees with Remark \ref{rem:DAvsAD}. In Section \ref{sect: NumExp} we illustrate that, in many circumstances, the new solvers presented in this subsection deliver superior performance with respect to traditional solvers (considered in the previous subsection).

We first consider the approximation (A) in \eqref{def:itdep_obj}, and we then perform a projection (P), analogous to the one underlying the M-$g$ method in Section \ref{sect:PCGLS_PLSQR}: namely, we constrain the variable in \eqref{def:itdep_obj} to belong to the space $\calR(\bfV_k)$, i.e., we compute 
\begin{equation}\label{def:xsubspflex}
\bfx_k=\bfV_k\bfs_k,\quad\mbox{with}\quad\calR(\bfV_k)=\calK_k(\bfA\t\bar\bfR_{k+1}\bfA,\bfA\t\bar\bfR_{k+1}\bfr_0)
\end{equation}
by solving
\begin{equation}\label{eq:skminbargk}
\bfs_k=\argmin_{\bfs\in\bbR^k}\bar{g}^{(k)}_k(\bfs),
\end{equation}
where, using the FGK relations \eqref{eq:FGK_2},
\begin{eqnarray}\label{eq: gkbars}
    \bar g^{(k)}_k(\bfs) &=& \nicefrac{1}{2}\left(\bfs\t\bfV_k\t\bfA\t\bar{\bfR}_{k+1}^{(S)}\bfA\bfV_k\bfs - 
2\bfs\t\bfV_k\t\bfA\t\bar{\bfR}_{k+1}^{(S)}\bfr_0 
+\bfr_0\t\bar{\bfR}_{k+1}^{(S)}\bfr_0\right).
\end{eqnarray}
To solve the projected problem \eqref{eq:skminbargk} we differentiate (D) $\bar g_k^{(k)}(\bfs)$, getting the equation
\[
\nabla \bar g_k^{(k)}(\bfs)=\bfV_k\t\bfA\t\bar\bfR_{k+1}^{(S)}\bfA\bfV_k \bfs - \bfV_k\t\bfA\t\bar{\bfR}_{k+1}^{(S)}\bfr_0=\bfzero,
\]
where $\bar\bfR_{k+1}^{(S)}$ is the symmetric part of $\bar\bfR_{k+1}$. 
Considering again the FGK relations in \eqref{eq:FGK_2}, and performing some straightforward algebraic manipulations (similar to the ones used to derive problem \eqref{eq:projected_classic}), 
we eventually recover $\bfs_k$ as the vector satisfying
\begin{equation}\label{eq: NE2}
({\bfT}_{k,k+1}{\bfM}_k+ ({\bfT}_{k,k+1}{\bfM}_k)\t)\bfs_k = \beta t_{1,1}\bfe_1 + \bfM_k\t\bfY_{k+1}\t\bfr_0,
\end{equation}
where, like in the previous subsection, $\beta=\|\bfr_0\|_{2}$, $t_{1,1}=[\bfT_{k+1}]_{1,1}$, and 
${\bfT}_{k,k+1}$
is ${\bfT}_{k+1}$ without its last row. The APD method so derived is a LSQR-like solver. In the framework of \cite{Eiermann_Ernst_2001}, this method may be regarded as a `minimal inexact objective function' (M-$\bar g^{(k)}$) method that determines $\bfA\t\bar\bfR_{k+1}^{(S)}\bfA\bfx_k$ by performing an oblique projection of $\bfA\t\bar\bfR_{k+1}^{(S)}\bfr_0$ \msl{onto $\bfA\t\bar\bfR_{k+1}^{(S)}\bfA\calK_k(\bfA\t\bar\bfR_{k+1}\bfA,\bfA\t\bar\bfR_{k+1}\bfr_0)$}, orthogonal to $\calK_k(\bfA\t\bar\bfR_{k+1}\bfA,\bfA\t\bar\bfR_{k+1}\bfr_0)$. Equivalently, it is based on a projection procedure
that determines $(\bRSk)^{\nicefrac{1}{2}}\bfA\bfx_k$ by performing an orthogonal projection of $(\bRSk)^{\nicefrac{1}{2}}\bfr_0$ onto \linebreak[4]$(\bRSk)^{\nicefrac{1}{2}}\bfA\calK_k(\bfA\t\bar\bfR_{k+1}\bfA,\bfA\t\bar\bfR_{k+1}\bfr_0)$. 
%I BELIEVE THIS IS NOW POSSIBLE, AND AMOUNTS TO CONSIDERING THE NEW (SEMI)NORM. EDIT FROM HERE. Even if, except for the the initial approximation in the derivation of the APD method, the same operations are performed in the same order for both APD and preconditioned LSQR (in Section \ref{sect:PCGLS_PLSQR}), it is impossible to define an orthogonal projection underlying the APD method (while it was possible to do so for preconditioned LSQR). 
%
%Ultimately, this is linked to the fact that the $\calR(\bfV_k)$-restricted iteration-dependent inexact objective function $\bar g_k^{(k)}(\bfs)$ in \eqref{eq: gkbars} cannot be compactly written as some weighted norm of a residual, since $\bar{\bfR}_{k+1}$ is rank-deficient and not symmetric. 
%We recall that also all the solvers presented in \Cref{sect: inexact1} were characterized in terms of oblique projections only, due to the inexactness affecting the approximation and constraint subspaces. 

Similarly, one can define an equivalent ADP method as follows. The starting point is the iteration-dependent approximate (A) functional $\bar{g}^{(k)}(\bfx)$ in \eqref{def:itdep_obj}, which is then  differentiated (D) to obtain 
\[
\nabla \bar g^{(k)}(\bfx)=\bfA\t\bRSk 
\bfA \bfx -  \bfA\t\bar{\bfR}_{k+1}^{(S)}\bfr_0
\]
and eventually projected (P) to get
$\bfx_k=\bfV_k\bfs_k$, with $\bfs_k\in\bbR^k$, such that 
\[
\nabla \bar{g}^{(k)}(\bfx_k)\perp \calR(\bfV_k)=\calK_k(\bfA\t\bRk\bfA,\bfA\t\bRk\bfr_0)\quad\Longleftrightarrow\quad\mbox{$\bfs_k$ solves \eqref{eq: NE2}}.  
\]
The ADP method so derived is a CGLS-like solver. In the framework of \cite{Eiermann_Ernst_2001}, this method may be regarded as an `orthogonal inexact normal equation residual' (O-$\nabla\bar{g}^{(k)}$) that determines $\bfA\t\!\bRSk\!\bfA\bfx_k$ by performing an oblique projection of $\bfA\t\!\bRSk\!\bfr_0$ onto $\bfA\t\bRSk\bfA\calR(\bfV_k)$, 
orthogonal to $\calR(\bfV_k)$, exactly as the LSQR-like APD solver. Remarkably, in the flexible/inexact setting, and after approximating the exact $g(\bfx)$ in \eqref{eq: g} (or $g^{(k)}(\bfx)$ in \eqref{eq: gk_exact}) by $\bar{g}^{(k)}(\bfx)$ in \eqref{def:itdep_obj}, APD and ADP are the only LSQR-like and CGLS-like solvers that are mathematically equivalent when applied to minimize $\bar{g}^{(k)}(\bfx)$ and solve $\nabla \bar{g}^{(k)}(\bfx)=\bfzero$.

Finally, and again equivalently, one can start by projecting (P) the original $g(\bfx)$ to get an approximation $\bfx_k$ still belonging to the subspace $\calR(\bfV_k)$ as in \eqref{def:xsubspflex}, i.e., one computes
\[
\bfx_k=\bfV_k\bfs_k,\quad\mbox{with}\quad\bfs_k=\argmin_{\bfs\in\bbR^k}g_k(\bfs)\quad\mbox{and}\quad\mbox{
$g_k(\bfs)$ defined as in \eqref{eq:NE1_PetrovGalerkin}},
\]
and then takes $\bar{g}_k^{(k)}(\bfs)$ in \eqref{eq: gkbars} to be an approximation (A) of $g_k(\bfs)$, eventually obtaining the same formulation as \eqref{eq:skminbargk}. By applying differentiation (D) to impose the optimality conditions, one obtains again that $\bfs_k$ solves problem \eqref{eq: NE2}, resulting in a LSQR-like PAD method equivalent to the first APD method of this class. Moreover, both PAD and APD share the same description in terms of the formalism in \cite{Eiermann_Ernst_2001}, including their characterization as M-$\bar{g}^{(k)}$ methods. 

\paragraph{Inexactness estimates} Similarly to what we did in \Cref{sect: inexact1}, we provide some inexactness estimates, now concerning the gap between the exact functional $g^{(k)}(\bfx)$ at iteration $k$, defined in \eqref{eq: gk_exact}, and the its inexact version $\bar{g}^{(k)}(\bfx)$, defined in \eqref{def:itdep_obj}, both evaluated at $\bfx_k\in\calR(\bfV_k)$. 

Assume that, at iteration $k$ of our new method, we have an approximate solution ${\bar\bfx_k}=\bfx_0+\bfV_k\bfs_k$
First note that, similarly to what already stated for $\bar{g}^{(k)}$, when restricting the exact objective function at the $k$th iteration $g^{(k)}$ to $\calR(\bfV_k)$, i.e., when computing 
\begin{equation}\label{eq:res_norm}
    g^{(k)}_k(\bfs)= \nicefrac{1}{2} \|\bfA\bfV_k\bfs-\bfr_0\|_{\bfR_{k+1}^{-1}}^2 =  \nicefrac{1}{2}  \|\bfU_{k+1} {\bfM}_k\bfs-\bfU_{k+1} \beta \bfe_1\|_{\bfR_{k+1}^{-1}}^2
\end{equation}
 on any possible coefficients $\bfs\in\bbR^k$, 
$\bfR_{k+1}^{-1}$ is only applied to vectors in $\calR(\bfU_{k+1})=\text{span}\{\bfu_1,\dots,\bfu_{k+1}\}$. 
%Moreover, we have the same property when evaluating  $\bar g^{{(k)}}_k(\bfs)$, where $\bar\bfR_{k+1}$ is only applied to vectors in the range of $\bfU_{k+1}$. 
Now, let us define the following residual as
\[  \bfr_k=\bfr_0-\bfA\bfV_k\bfs_k  = \bfU_{k+1} (\beta \bfe_1-{\bfM}_k\bfs_k) \in \mathcal{R}(\bfU_{k+1}),\]
and note that, for any $\bfu=\sum_{i=1}^{k+1}(\bfu_i\t\bfu)\bfu_i\in\mathcal{R}(\bfU_{k+1})$,
\begin{eqnarray*}
        \bfu\t (\bar{\bfR}_{k+1}-\bfR_{k+1}^{-1}) \bfu= \sum_{i=1}^{k+1}  \bfu\t (\bfR^{-1}_i -\bfR^{-1}_{k+1}) \bfu_i \bfu_i\t \bfu.
\end{eqnarray*}
%Note that step-by-step details can be found in  Appendix \ref{appendix_bounds}. 
Then, the magnitude of the difference between the exact and the inexact projected functionals on the solution at any iteration $k$ can be bounded as follows: 
\begin{eqnarray*}
        |\bar g^{(k)}_k(\bfs_k)- g^{(k)}_k(\bfs_k)| &=& \nicefrac{1}{2}\left|\sum_{i=1}^{k+1}  \bfr_k\t (\bfR^{-1}_i -\bfR^{-1}_{k+1}) \bfu_i \bfu_i\t \bfr_k\right| \\
        &\leq& \nicefrac{1}{2}\sum_{i=1}^{k+1} \left| \bfr_k\t (\bfR^{-1}_i -\bfR^{-1}_{k+1}) \bfu_i \bfu_i\t \bfr_k\right| \\
        &\leq& \nicefrac{1}{2}\| \bfr_k \|  \sum_{i=1}^{k+1} \| \bfR^{-1}_i -\bfR^{-1}_{k+1} \| 
        |[{\bfM}_k\bfs_k-\beta \bfe_1]_i|
\end{eqnarray*}        
where we have used that $\bfu^T \bar \bfR_{k+1}^{(S)}\bfu = \bfu^T \bar \bfR_{k+1} \bfu $.

\begin{remark}\label{rem: rest2} Similarly to the classical inexact methods, restarting when the original problem is too far from the problem that is solved in practice, is crucial to obtain good approximations of the solution.
\end{remark}
\begin{remark}
As mentioned in the introduction in Section \ref{sec: Intro}, a canonical example for our framework is the approximation of the following non-linear problem
\begin{eqnarray}\label{eq:MM_f}
f(\bfx)= \nicefrac{1}{2} \|\bfA\bfx-\bfb\|^2_{\bfR^{-1}(\bfx)}
\end{eqnarray}
arising, for example, from considering the $\ell_p$ optimization in \eqref{eq: LSp} where $\bfR^{-1}(\bfx)$ is defined in \eqref{eq:weights_lp}. In this case, the inexactness estimates can be useful to understand the convergence of the flexible MM schemes, as detailed in the following proposition. Theoretically, one should restart once monotonicity of the objective function throughout the iterates cannot be guaranteed. Note that, even if the exact conditions are expensive to check in practice, this theory underpins the effect of restarting in the new flexible solvers.
\end{remark}
\begin{proposition}\label{lemma1}Monotonicity properties for flexible MM schemes. 
Consider the approximate solutions given the new flexible solvers to be $\{\bfx_k\}_{k=1,...}$, and assume that $\bfR^{-1}_{k+1} = \bfR^{-1}(\bfx_{k-1})$.  Then, for $f(\bfx)$ defined in \eqref{eq:MM_f}, we can guarantee monotonicity in the objective value at each iteration $k$ if sufficient progress is achieved in the corresponding inexact approximations $\bar{g}^{(k)}(\bfx)$. Mathematically, 
\[
f(\bfx_k) < f(\bfx_{k-1}) 
\quad
\Leftarrow
\quad
\bar{g}^{(k)}(\bfx_{k-1}) - \bar{g}^{(k)}(\bfx_{k}) >
| \bfnu\t \sum_{i=1}^{k+1} (\bfR_i^{-1} -\bfR_{k+1}^{-1} ) \bfu_i\bfu_i\t \bfnu 
|, 
\]
for $\bfnu = \bfr_k-\bfr_{k-1} = \bfU_k \left( \bfy_k - \begin{bmatrix}
    \bfy_{k-1} \\ 0
\end{bmatrix}\right)$.
\end{proposition}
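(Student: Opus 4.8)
The plan is to chain the majorization--minimization (MM) reading of $g^{(k)}$ (see \Cref{sec: Intro} and \eqref{eq:weights_lp}) with the exact inexactness identity derived just above the statement, plus the fact that the flexible subspaces are nested. Throughout, write $\bfS_k:=\sum_{i=1}^{k+1}(\bfR_i^{-1}-\bfR_{k+1}^{-1})\bfu_i\bfu_i\t$ for the matrix in the claimed bound (its $i=k+1$ summand is identically $\bfzero$). Since $\bfR_{k+1}^{-1}=\bfR^{-1}(\bfx_{k-1})$, the quadratic $g^{(k)}$ is, up to an $\bfx$-independent constant $c_k$, the tangent majorant of $f$ at $\bfx_{k-1}$ in the MM sense recalled in \Cref{sec: Intro}; thus $f(\bfx)\le g^{(k)}(\bfx)+c_k$ for every $\bfx$, with equality at $\bfx=\bfx_{k-1}$, and subtracting the evaluations at $\bfx_k$ and at $\bfx_{k-1}$ makes $c_k$ cancel:
\[
f(\bfx_k)-f(\bfx_{k-1})\;\le\;g^{(k)}(\bfx_k)-g^{(k)}(\bfx_{k-1}).
\]
So it suffices to prove that the hypothesis forces $g^{(k)}(\bfx_k)<g^{(k)}(\bfx_{k-1})$.

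I would then pass from $g^{(k)}$ to $\bar g^{(k)}$. As Algorithm~\ref{alg: FGK} appends one column at a time, $\calR(\bfV_{k-1})\subseteq\calR(\bfV_k)$, so $\bfx_{k-1}$ is a feasible point for the projected problem \eqref{eq:skminbargk} defining $\bfx_k$ (whence already $\bar g^{(k)}(\bfx_{k-1})-\bar g^{(k)}(\bfx_k)\ge 0$, so the hypothesis is not vacuous), and both residuals $\bfr_{k-1}=\bfU_k(\beta\bfe_1-\bfM_{k-1}\bfs_{k-1})$ and $\bfr_k=\bfU_{k+1}(\beta\bfe_1-\bfM_k\bfs_k)$ lie in $\calR(\bfU_{k+1})$. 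Hence the computation carried out immediately before the proposition applies verbatim with $\bfx_k$ replaced by $\bfx_{k-1}$ as well, yielding the two exact identities $g^{(k)}(\bfx_j)=\bar g^{(k)}(\bfx_j)-\tfrac12\,\bfr_j\t\bfS_k\bfr_j$ for $j\in\{k-1,k\}$. Subtracting them,
\[
g^{(k)}(\bfx_k)-g^{(k)}(\bfx_{k-1})=\big(\bar g^{(k)}(\bfx_k)-\bar g^{(k)}(\bfx_{k-1})\big)-\tfrac12\big(\bfr_k\t\bfS_k\bfr_k-\bfr_{k-1}\t\bfS_k\bfr_{k-1}\big).
\]
Using $\bfr_k=\bfr_{k-1}+\bfnu$ (equivalently $\bfnu=\bfU_{k+1}(\bfy_k-[\,\bfy_{k-1};\,0\,])$), together with $\bfr_{k-1}\in\calR(\bfU_k)$ and the optimality of $\bfx_{k-1}$ and $\bfx_k$ for the projected problems at steps $k-1$ and $k$, I would reduce $\bfr_k\t\bfS_k\bfr_k-\bfr_{k-1}\t\bfS_k\bfr_{k-1}$ to $\bfnu\t\bfS_k\bfnu$ up to a term that vanishes or is dominated by $|\bfnu\t\bfS_k\bfnu|$, which gives
\[
g^{(k)}(\bfx_k)-g^{(k)}(\bfx_{k-1})\;\le\;-\big(\bar g^{(k)}(\bfx_{k-1})-\bar g^{(k)}(\bfx_k)\big)+\big|\bfnu\t\bfS_k\bfnu\big|.
\]

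The conclusion is then immediate: under $\bar g^{(k)}(\bfx_{k-1})-\bar g^{(k)}(\bfx_k)>|\bfnu\t\bfS_k\bfnu|$ the right-hand side above is strictly negative, so $g^{(k)}(\bfx_k)<g^{(k)}(\bfx_{k-1})$, and the MM inequality of the first step delivers $f(\bfx_k)<f(\bfx_{k-1})$. The step I expect to be the main obstacle is the reduction in the last display, i.e.\ controlling the $\bfr_{k-1}$--$\bfnu$ cross term in $\bfr_k\t\bfS_k\bfr_k-\bfr_{k-1}\t\bfS_k\bfr_{k-1}$ and showing that only the $\bfnu\t\bfS_k\bfnu$ contribution survives (or dominates). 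This is where the weighted orthogonality of $\bfr_{k-1}$ and $\bfr_k$ to the spaces $\bfA\calR(\bfV_{k-1})$ and $\bfA\calR(\bfV_k)$, and the fact that $\bfnu$ only activates the newly generated directions of $\calR(\bfU_{k+1})$, have to be used; everything else is bookkeeping on top of the MM property and the inexactness identity already established.
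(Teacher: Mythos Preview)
Your plan coincides with the paper's argument: use the MM tangent-majorant property to reduce to $g^{(k)}(\bfx_k)<g^{(k)}(\bfx_{k-1})$, then write $g^{(k)}(\bfx_{k-1})-g^{(k)}(\bfx_k)$ as $\bar g^{(k)}(\bfx_{k-1})-\bar g^{(k)}(\bfx_k)$ minus an error term of the form $\bfnu\t\bfS_k\bfnu$, and compare magnitudes. The reduction you flag as the main obstacle---collapsing $\tfrac12(\bfr_k\t\bfS_k\bfr_k-\bfr_{k-1}\t\bfS_k\bfr_{k-1})$ to $\bfnu\t\bfS_k\bfnu$, i.e.\ eliminating the $\bfr_{k-1}$--$\bfnu$ cross term---is not argued in the paper either: the proof there simply asserts the identity $g^{(k)}(\bfx_{k-1})-g^{(k)}(\bfx_k)=\bar g^{(k)}(\bfx_{k-1})-\bar g^{(k)}(\bfx_k)-\bfnu\t\bfS_k\bfnu$ and proceeds, so no hidden orthogonality trick awaits you there.
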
 
\begin{proof}
Recall the definition of $g^{(k)}(\bfx)$ for this specific case,
\[
g^{(k)}(\bfx)= \nicefrac{1}{2}\|\bfA\bfx-\bfb\|^2_{\bfR_{k+1}^{-1}}= \nicefrac{1}{2}\|\bfA\bfx-\bfb\|^2_{\bfR^{-1}(\bfx_{k-1})},
\]
and consider the following chain of in/equalities:
\[
f(\bfx_k) < g^{(k)}(\bfx_k) \leq g^{(k)}(\bfx_{k-1}) = f(\bfx_{k-1}).
\]
The first inequality is always true since $g^{(k)}(\bfx)$ is a majorant of $f(\bfx)$, and the last equality is always true since  $g^{(k)}(\bfx)$ is tangent to $f(\bfx)$ at $\bfx_{k-1}$. Therefore, we only need to find explicit conditions for the central inequality to hold. We start by writing
\begin{eqnarray*}
g^{(k)}(\bfx) = \bar g^{(k)}(\bfx)- (\bar g^{(k)}(\bfx)-g^{(k)}(\bfx)),
\end{eqnarray*}
and we use similar derivations to the ones use for the {inexactness estimates} earlier in this section to express the second term. 

In particular, we use that for any $\bfu\in\mathcal{R}(\bfU_{k+1})$
\[\bfR_{k+1}^{-1} \sum_{i=1}^{k+1} \bfu_i\bfu_i\t \bfu = \bfR_{k+1}^{-1} \bfu, \]
so that, since  $\bfr\in\mathcal{R}(\bfU_{k+1})$,
\begin{eqnarray*}
&&g^{(k)}(\bfx_{k-1})-g^{(k)}(\bfx_{k}) = \\
&&=\bar{g}^{(k)}(\bfx_{k-1}) - \bar{g}^{(k)}(\bfx_{k}) - (\bfr_{k-1}-\bfr_{k})\t \sum_{i=1}^{k+1} (\bfR_i^{-1} -\bfR_{k+1}^{-1})\bfu_i\bfu_i\t (\bfr_{k-1}-\bfr_{k}) 
\end{eqnarray*}
where we know that 
\[\bar{g}^{(k)}(\bfx_{k-1}) - \bar{g}^{(k)}(\bfx_{k}) = K> 0 \]
since $\bfx_{k-1}\in \mathcal{R}(\bfV_{k-1}) \subset \mathcal{R}(\bfV_{k})$ and $\bfx_{k}$ minimizes $\bar{g}^{(k)}(\bfx)$ in $\mathcal{R}(\bfV_{k})$. Therefore, we can guarantee that the inequality will hold if the magnitude of the error term is smaller than $K$ i.e.;
\[
|\left( \bfr_k - \bfr_{k-1} \right)\t  \, \sum_{i=1}^{k-1} \bfE_i^{-1}\bfu_i\bfu_i\t \left( \bfr_k - \bfr_{k-1} \right)| < K.
\]

Interestingly, this is always true in the first iteration after each restart, since there isn't an error in the first term. 
\end{proof}
\begin{remark}
Note that this analysis is only possible for the new APD, PAD and ADP solvers; because the inexactness estimates for classical solvers are only with respect to the gradient of the objective function and not the objective function itself.
\end{remark}

\section{Numerical experiments}\label{sect: NumExp}
This section presents two numerical experiments designed to evaluate the performance of both the classical and the new inexact methods developed in this paper. In both problems, the measurements are corrupted with (sparse) salt-and-pepper noise, so it is reasonable to use an $\ell_1$ data-fitting term.  These examples highlight both the efficiency of the new methods with respect to other algorithms, as well as the improvement in reconstruction quality achieved by using the appropriate minimization problem. 

Note that, as explained in Remark \ref{rem: rest1} and Remark \ref{rem: rest2}, inexact methods greatly improve their performance if equipped with a restarting strategy. For these examples, we restart at the first iteration $k$ such that
\small 
\begin{equation}\label{eq:rest1}
\exists \,i,j \in\{1,\dots,k+1\}: \, [\bf\eps_i]_j-[\bf\eps_i]_{j-1}>0 \quad \text{for} \quad \bf\eps_i=\max(|\diag{\bfR_i^{-1}-\bfR_{k+2}^{-1}}\hspace{-2.5pt}|\,)\,.
\end{equation}
\normalsize
where $\bfR_{k+2}^{-1}=\bfR_{k+2}^{-1}=\bfR^{-1}(\bfx_k)$, or 
\small 
\begin{equation}\label{eq:rest2}
\frac{\|\bfA\bfx_k-\bfb\|_2-\|\bfA\bfx_0-\bfb\|_2}{\|\bfA\bfx_k-\bfb\|_2} > tol.
\end{equation}
\normalsize

Throughout the section, in order to evaluate the performance of the new algorithms and compare them to other standard methods, we report the relative error norm at iteration $k$, defined as $\|\bfx_k-\bfx\true\|_2/\|\bfx\true\|_2.$

All computations were carried out in MATLAB R2023a, using both solvers and test problems available within the IR Tools \cite{IRtools} and AIR Tools II \cite{hansen2018air} MATLAB toolboxes.

\paragraph{Example 1: satellite image deblurring problem}
This example corresponds to an image deblurring problem where both the exact image and the available data (affected by Gaussian blur and salt-and-pepper noise) have $256\times 256$ pixels and are displayed in Figure \ref{fig:Ex1_data}, together with the Gaussian point spread function defining the blur and noise array (also regarded as residual associated to the exact solution $\bfb-\bfA\bfx_{\true}=:\bfr_{\true}$). In particular, the salt-and-pepper noise is such that about 10\% of the pixels are randomly set to values 0 or 1. Since the noise in the measurements is sparse, it is beneficial to use an $\ell_1$ fit-to-data term to obtain a solution of better quality. 

\begin{figure}[h!]
    \centering
    \includegraphics[width=\linewidth]{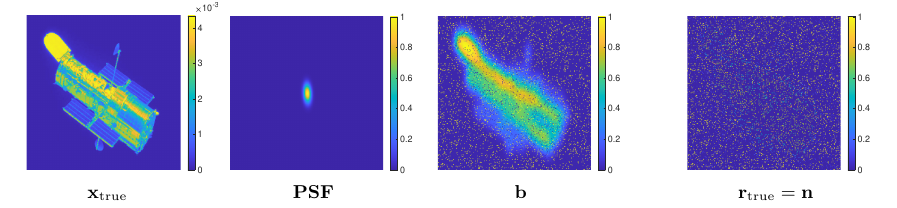}
    \caption{Data for the first example, representing an image deblurring test problem. From left to right: true solution, point spread function for the blur, noisy blurred image, and noise. The latter is displayed using the square root of the absolute values of each element.
    }
    \label{fig:Ex1_data}
\end{figure}

For this example, we compare the performance of all the inexact/flexible methods considered in this paper. Namely, the new APD method, 
% , described in \Cref{sect: inexact2}, 
and the other methods operating in the same framework: DAP (corresponding to classical inexact CGLS and inexact LSQR)  and DAP-LSMR (corresponding to classical inexact LSMR).

Since all of these are inexact methods, we compare their performance with and without restarts, where the restarts are triggered using the condition \eqref{eq:rest1} and using $tol=0.1$. Moreover, we also compare the new method to the classic LSQR, which considers a standard LS problem (with $\ell_2$-norm), and a standard IRLS solver for an $\ell_1$ fit-to-data term, using LSQR as an inner solver. Last, we also compare preconditioned LSQR using the weights evaluated in the exact solution – this is of course not a valid method, since it requires the knowledge of the exact solution, and it is only displayed as a lower bound for the error. 

The relative error histories for the different solvers are displayed in Figure \ref{fig:Ex1_RREs}. One can observe that the new method, jointly with the classic DAP and DAP-LSMR, produces the best reconstructions, as well as the fastest convergence behaviour, jointly with DAP.

\begin{figure}[ht!]
    \centering
    \includegraphics[width=\textwidth]{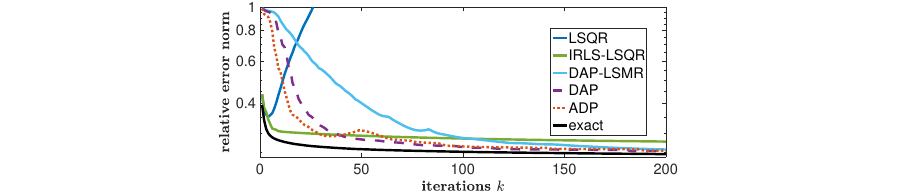}
    \includegraphics[width=\textwidth]{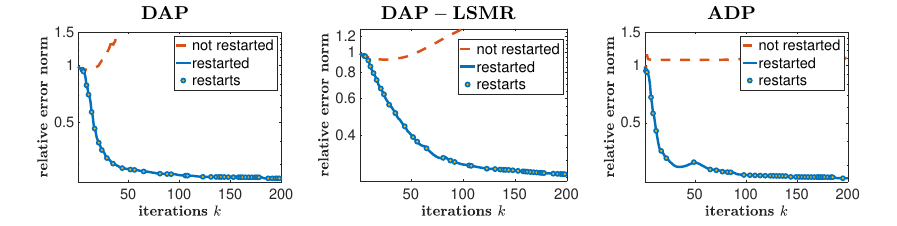}
    \caption{Example 1. Relative error norm values versus iteration number for a variety of solvers, with specific emphasis on the effect of restarting (second row). Note that the label `exact' corresponds to preconditioned LSQR using the weights evaluated in the exact solution.}
    \label{fig:Ex1_RREs}
\end{figure}

The quality of the reconstructions is illustrated in the first row of Figure~\ref{fig:Ex2_recs}, which shows the best reconstruction obtained by each method (excluding LSQR with weights evaluated in the exact solution used as a preconditioner). The second row of  Figure~\ref{fig:Ex2_recs} presents the corresponding error images, obtained by reshaping the reconstruction errors. Figure~\ref{fig:Ex2_relrecs} further displays the residuals and residual errors, also reshaped as images. For visualization purposes, the errors, residuals, and residual errors are shown as the absolute values of the square roots of the pixel values. Last, the basis vectors corresponding to the compared methods can be found in Figures \ref{fig:Ex1_basisvects_extra} and \ref{fig:Ex1_basisvects_extra_2} in Appendix \ref{sec:appendix1}.

\begin{figure}[ht!]
    \centering
    \includegraphics[width=\linewidth]{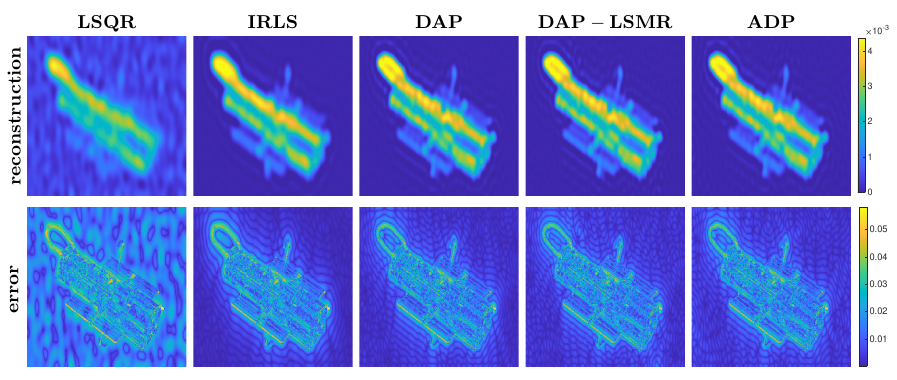} 
    \caption{Example 1. Top row: Best reconstructions for a variety of algorithms. Bottom row: Square root of the absolute values of the error vectors (all operations performed
entry-wise). }
    \label{fig:Ex2_recs}
\end{figure}

\begin{figure}[ht!]
    \centering
    \includegraphics[width=\linewidth]{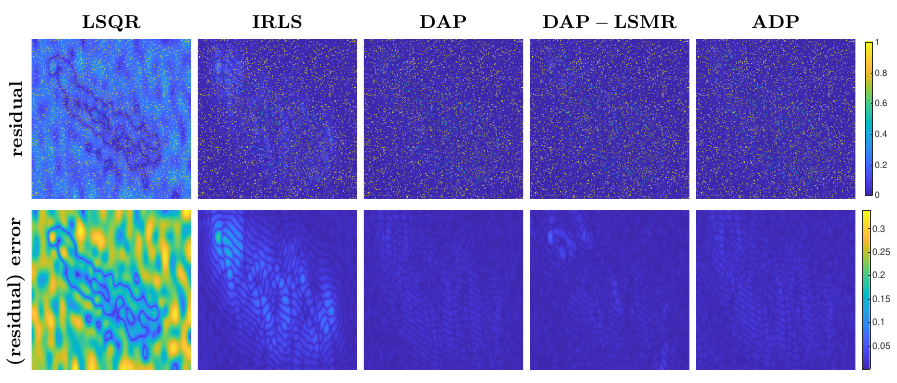} 
    \caption{Example 1. Top row: Residuals associated to the best reconstructions for a variety of algorithms. Bottom row: residual error vectors, i.e. $\bfA\bfx-\bfb-\bfr_\text{true}$. All of the images are displaying the square root of the absolute values of each of the entries. }
    \label{fig:Ex2_relrecs}
\end{figure}

\paragraph{Example 2: Tomography problem} This example corresponds to a (parallel beam) tomography test problem, where the solution has $256 \times 256$ pixels and the measurements, also know as sinogram, have $362 \times 180$ pixels (corresponding to the number of rays and the number of angles). The measurements have been normalized so that the maximum value is 1, and then `salt and pepper' noise has been added, where $10\%$ of the pixels are randomly taken to be 1 or 0. This test is created using IRtools \cite{IRtools}, and the corresponding data can be found in Figure \ref{fig:Ex3_data}.

\begin{figure}[h!]
    \centering
     \includegraphics[width=\linewidth]{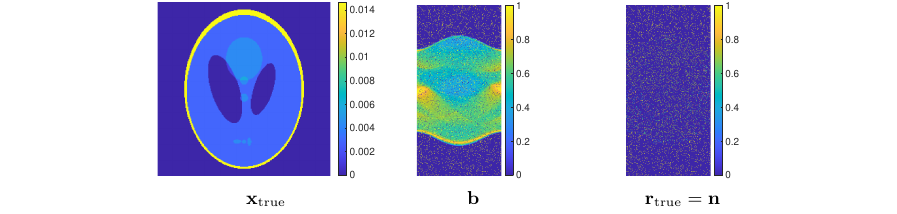}
    \caption{Data for the second example, representing a computed tomography  test problem. Left: true solution. Center: noisy measurements (a.k.a. sinogram). Right: square root of the absolute value of the noise, or the residual of the true solution (all operations performed entry-wise).}
    \label{fig:Ex3_data}
\end{figure}

The relative error norm histories for different methods can be observed in Figure~\ref{fig:Ex3_RREs}. For this example, the new inexact method (ADP), performs much better than any of the compared methods. However, from the bottom row of Figure~\ref{fig:Ex3_RREs}, it becomes evident again that restarting the iterations is necessary for the inexact methods to perform well. This matches the theory presented in Proposition~\ref{lemma1}. Note that, here, DAP and DAP-LSMR are restarted using \eqref{eq:rest1} and ADP is restarted using \eqref{eq:rest2}, and in both cases $tol=0.1$. The selection between these two criteria has been done to obtain the best results possible for each of the methods.

\begin{figure}[ht!]
    \centering
    \includegraphics[width=\textwidth]{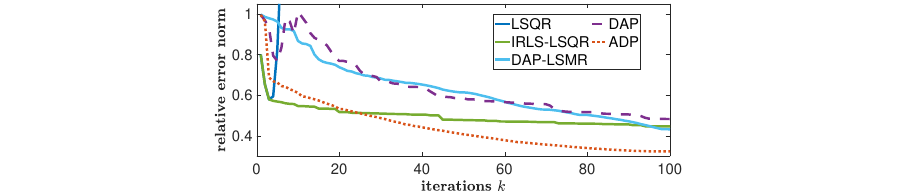}
    \includegraphics[width=\textwidth]{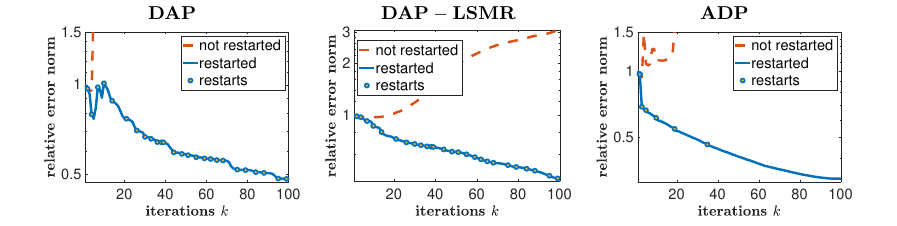}
    \caption{Example 2. Relative error norm values versus iteration number for a variety of solvers, with specific emphasis on the effect of restarting (bottom row).}
    \label{fig:Ex3_RREs}
\end{figure}

The reconstructions using different methods can be found in Figure~\ref{fig:Ex3_recs}, along with the errors (reshaped as an image). The latter have been displayed as the square root of the absolute value of each pixel, to help visualization, due to the big difference between their values. Here, one can observe again that the new ADP method produces the best quality reconstructions overall. It is also interesting to note that the error for the ADP method is less localized than that corresponding to the other methods. Moreover, the residuals, as well as the residual errors (defined as $\bfA\bfx-\bfb-\bfr_\text{true}$) can be observed in Figure \ref{fig:Ex3_relrecs}, reshaped as images. Similarly to what we can observe in Figure~\ref{fig:Ex3_recs}, the new ADP method is the one that most accurately reconstructs the true residual, or, equivalently, the noise in the measurements. Finally, the basis vectors corresponding to the compared methods can be found in Figures \ref{fig:Ex2_basisvects_extra} and \ref{fig:Ex2_basisvects_extra_2} in Appendix \ref{sec:appendix1}.

\begin{figure}[ht!]
    \centering
    \includegraphics[width=\linewidth]{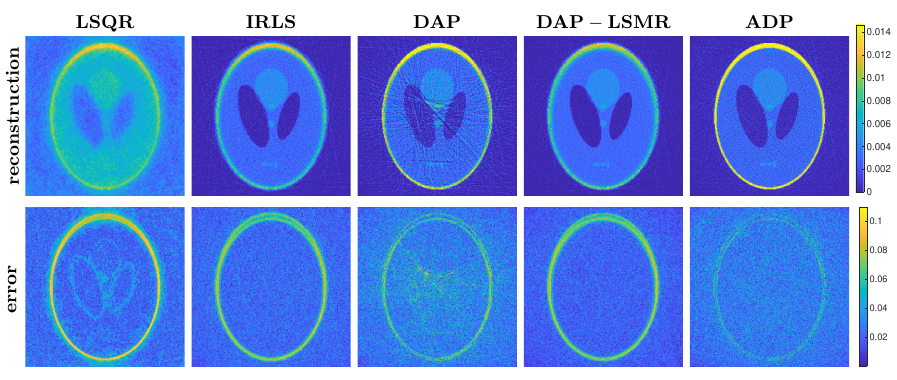} 
    \caption{Example 2. Top row: Best reconstructions for a variety of algorithms. Bottom row: Square root of the absolute values of the error vectors (all operations performed
entry-wise). }
    \label{fig:Ex3_recs}
\end{figure}

\begin{figure}[ht!]
    \centering
    \includegraphics[width=\linewidth]{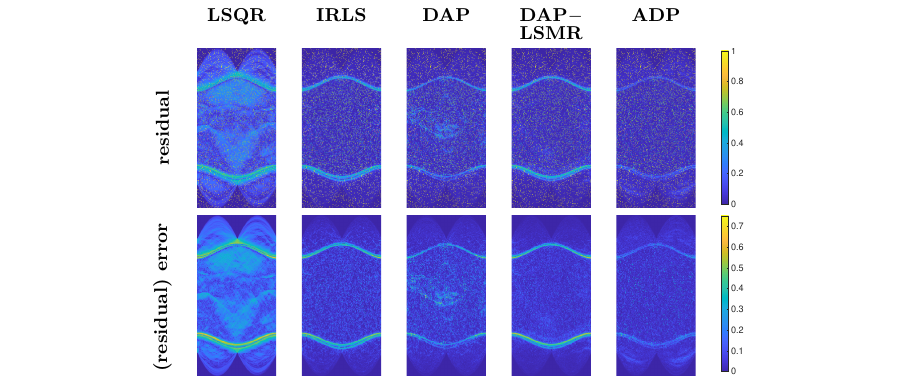} 
    \caption{Example 2. Top row: Residuals associated to the best reconstructions for a variety of algorithms. Bottom row: residual error vectors, i.e. $\bfA\bfx-\bfb-\bfr_\text{true}$. All of the images are displaying the square root of the absolute values of each of the entries. }
    \label{fig:Ex3_relrecs}
\end{figure}

\section{Conclusions and outlook} 
This paper introduces a generalized framework for inexact Krylov subspace methods, contributing an original insight on existing methods, as well as providing theoretical motivation for the development of new solvers. In particular, we present a novel inexact Golub-Kahan factorization associated to this new class of inexact Krylov methods, as well as an alternative residual constraint for the solution which can yield improved results with respect to their standard inexact counterparts. In this work, we also recall known bounds on the inexactness on traditional solvers, and we show that a more accurate measure can be given for the presented methods. This is based on the idea that the new methods are based on explicit optimality conditions of an approximated functional.  

The main application focus of this paper is a new type of flexible Krylov subspace methods, which is particularly suited for problems involving a fit-to-data functional given in a $\ell_p$ norm. The starting point for this problem is to consider a sequence of weighted least-squares problems that can be partially solved by considering iteration-dependent left preconditioning which depends on the last available approximation to the solution. The new inexact methods prove to be efficient alternatives to other methods. Moreover, for the solutions obtained with the new methods, we give conditions on the monotonicity of the original function with respect to the iterations, which motivate the use of restarts. However, these conditions are expensive to compute, so it is a topic of future work to investigate computationally cheap restarting conditions based on this theory. 

The numerical experiments illustrate both the effectiveness and efficiency of the proposed methods compared to other standard solvers for an $\ell_p$ fit-to-data term.

Last, note that both the theory as well as the new solvers can be generalized to include explicit regularization in the projection functionals. This can be either Tikhonov regularization, to obtain a so called hybrid method, or more advanced regularization functionals such as those appearing in $\ell_p-\ell_q$ regularization. However, we leave this as a topic of further research. 

\bibliographystyle{plain}
\bibliography{SIRev_bibliography}

\appendix

\section{Basis vectors}\label{sec:appendix1}
In this Appendix, we show a selection of basis vectors constructed by the different methods compared in Section \ref{sect: NumExp}. Note that, even if some of these processes share the same flexible Golub-Kahan factorization described in Algorithm \ref{alg: FGK}, the iteration-dependent preconditioning depends on the approximations of the solution at each iteration and, therefore, the solution subspace depends indirectly on the optimality conditions that differentiate the methods. Note that, particularly for the computed tomography example, one can clear observe the difference between the two sets of basis vectors. Moreover, it is interesting to note that the basis vectors look very different for the different methods. Understanding how the properties of the basis vectors constructed by the ADP method relate to the improved quality of their results is left as future work.

\begin{figure}[ht]
\centering
\includegraphics[width=\textwidth]{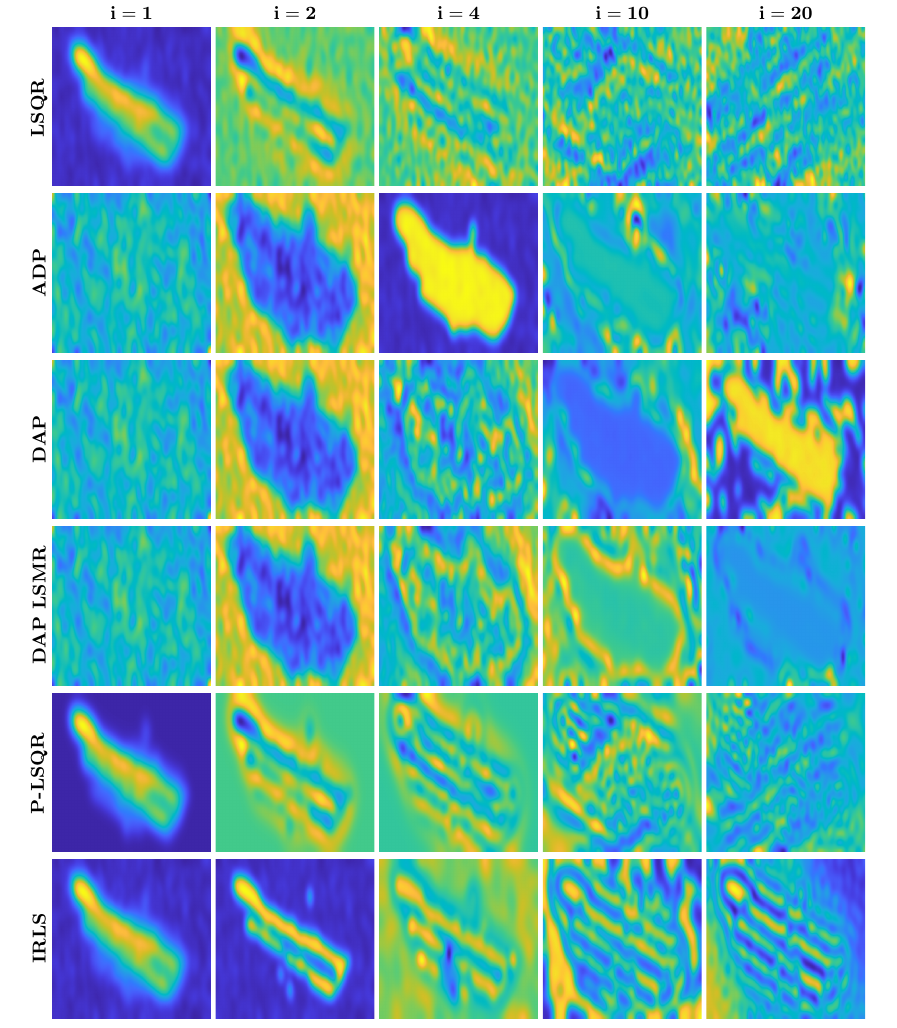} 
    \caption{Example 1. Basis vectors for the solution space, i.e. $\bfV$ columns, for the different compared methods.}
    \label{fig:Ex1_basisvects_extra} 
\end{figure}

\begin{figure}[ht]
\centering
\includegraphics[width=\textwidth]{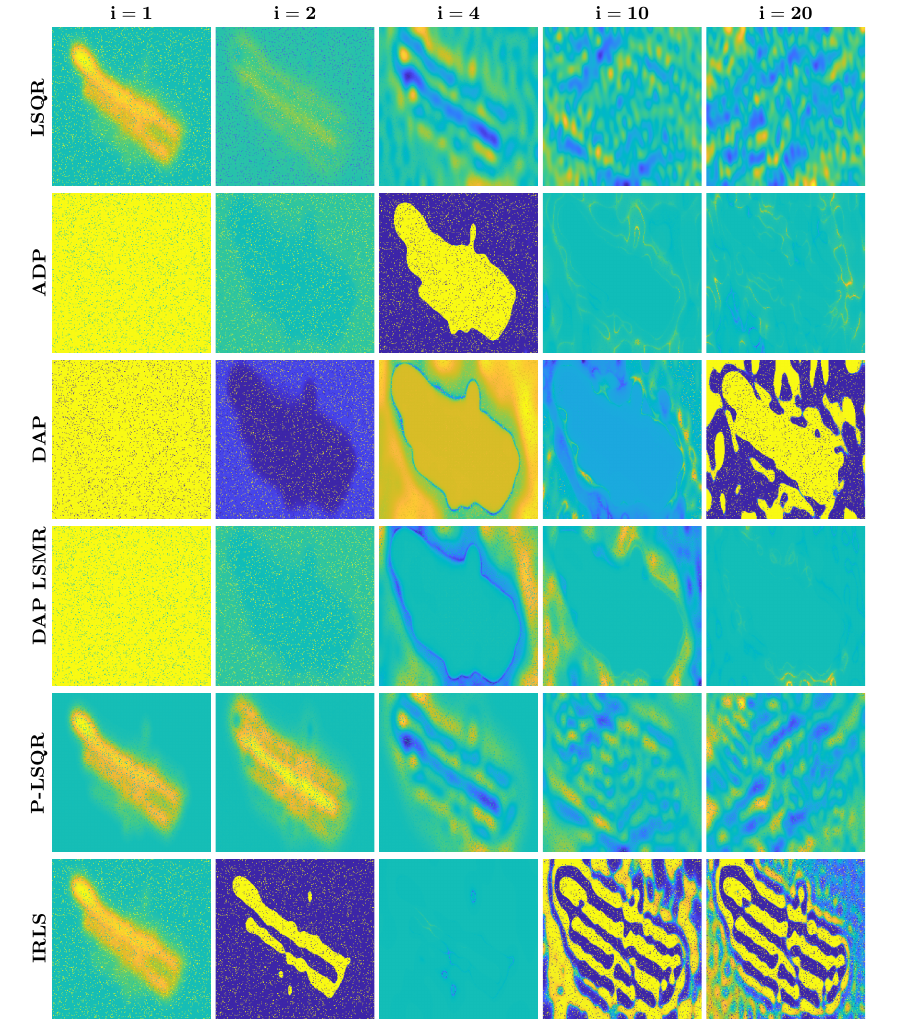} 
    \caption{Example 1. Basis vectors for the residual space, i.e. $\bfZ$ columns, for the different compared methods.}
    \label{fig:Ex1_basisvects_extra_2}
\end{figure}

\begin{figure}[ht]
\centering
\includegraphics[width=\textwidth]{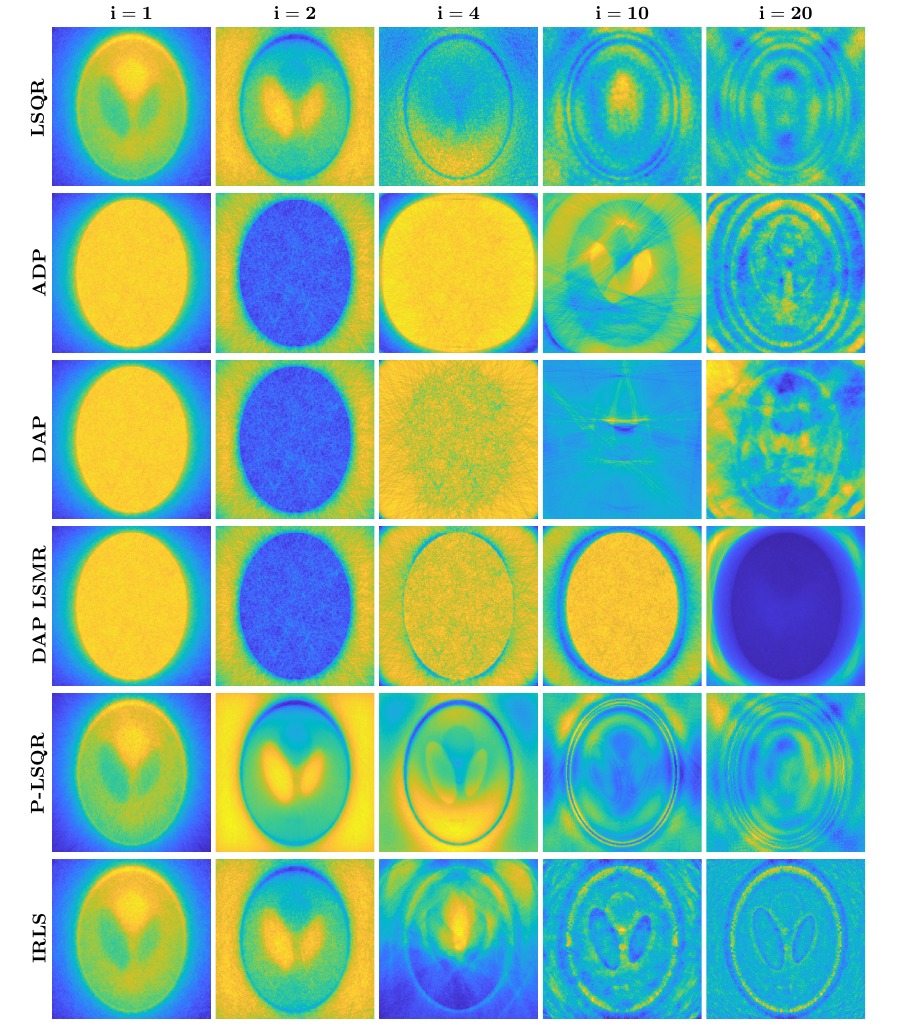}  
    \caption{Example 2. Basis vectors for the solution space, i.e. $\bfV$ columns, for the different compared methods.}
    \label{fig:Ex2_basisvects_extra}
\end{figure}

\begin{figure}[ht]
\centering
\includegraphics[width=0.6\textwidth]{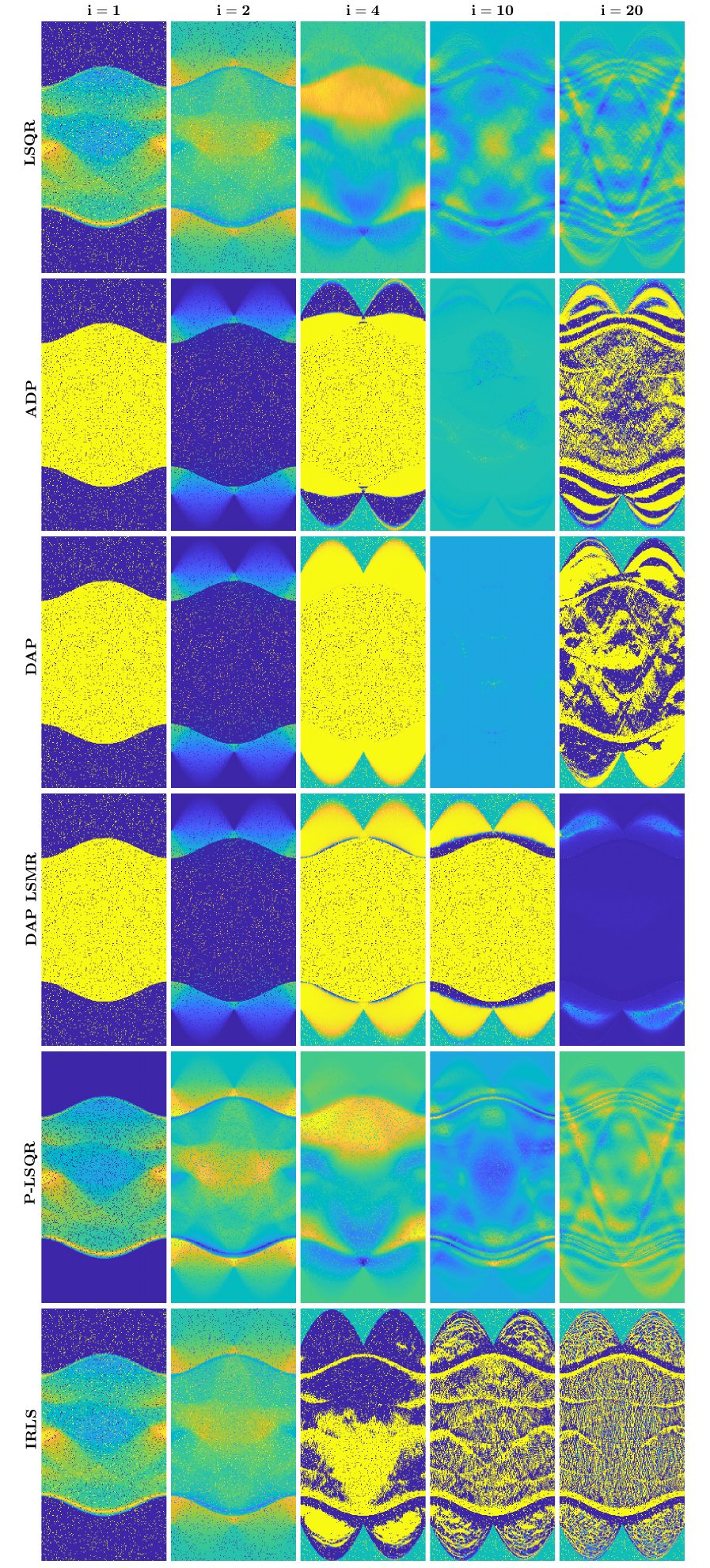}  
    \caption{Example 2. Basis vectors for the residual space, i.e. $\bfZ$ columns, for the different compared methods.}
    \label{fig:Ex2_basisvects_extra_2}
\end{figure}

\end{document}